\theoremstyle{plain}
    \newtheorem*{conclusion*}{Conclusion}
				\newtheorem*{nttn*}{Notation}
                    \newtheorem{df}{Definition}[section]
                \newtheorem{conclusion}{Conclusion}[section]
				\newtheorem{thm}{Theorem}[section]
                \newtheorem{notation}{Notation}[section]
				\newtheorem{lem}[thm]{Lemma}		
						\newtheorem{cor}[thm]{Corollary}
\newtheorem{rem}[thm]{Remark}
	\theoremstyle{remark} 	
\DeclareMathOperator{\zz}{\mathbb{Z}}
\DeclareMathOperator{\nn}{\mathbb{N}}
\DeclareMathOperator{\calA}{\mathcal{A}}
\DeclareMathOperator{\calC}{\mathcal{C}}
\DeclareMathOperator{\calF}{\mathcal{F}}
\DeclareMathOperator{\calO}{\mathcal{O}}
\title{Hard Instances of Discrete Logarithm Problem and Cryptographic Applications}
\date{\today}
\author{Christopher Battarbee, Arman Darbinyan, Delaram Kahrobaei}
\address{Sorbonne University, France}
\email{kit.battarbee@ed.ac.uk}
\address{University of Southampton, UK}
\email{a.darbinyan@soton.ac.uk}
\address{The City University of New York, Queens College and Graduate Center, USA}
\email{delaram.kahrobaei@qc.cuny.edu}
\keywords{Discrete Logarithm Problem, Post-quantum Cryptography, Infinite Groups, NP-hardness}
\begin{document}

\begin{abstract}
Let $f: \nn \rightarrow \nn$ be an arbitrary integer valued function. The goal of this note is to show that one can construct a finitely generated group in which the discrete log problem is polynomially equivalent to computing the function $f$. In particular, we provide infinite, but finitely generated groups, in which the discrete logarithm problem is arbitrarily hard. As a corollary, we construct a family of two-generated groups that have polynomial time word problem and NP-hard discrete log problem. Additionally, using our framework, we propose a generic scheme of cryptographic protocols, which might be of independent interest.

\end{abstract}

 \maketitle

\section{Introduction}

The Discrete Logarithm Problem (DLP) is the foundational problem of public-key cryptography. Its security underpins that of the Diffie-Hellman key exchange, which is, for example, the mechanism for establishing secrets in TLS 1.3. Despite its instrumental role in facilitating billions of everyday secure communications, the difficulty of solving the DLP remains a subject about which relatively little is known.

The classical result providing a lower bound on the complexity of DLP is that of Shoup, presented in \cite{shoup1997lower}: in something called the ``generic group model'', one can show that an algorithm solving DLP in a generic group $G$ must perform $\mathcal{O}(\sqrt{p})$ operations, where $p$ is the largest prime divisor of $|G|$. Since many of the best-known algorithms for DLP have about this complexity we can conclude that they are more or less optimal.

Shoup's result requires treating groups in a very generic way, and one's mileage may vary if more information about the group elements is revealed. This is of particular interest for DLP on elliptic curves, which is the flavour of the problem most commonly encountered in modern communication. On the other hand, as far as we are aware, there are no prior results whereby more information about the group actually \textit{improves} the Shoup bound. We accomplish this in this work.



Indeed, in this direction, an obvious generalization of the problem to study is DLP in an infinite group. 
Very little study exists on this subject - in this paper, we demonstrate that by moving to the infinite context, it is not only possible to obtain plausibly difficult instances of DLP, but instances of DLP that are as difficult as the evaluation of any function on the integers. 

More precisely, this paper provides a translation of the theory of one-way functions to an infinite-group-theoretic setting. We show that for any function $f:\nn\to\nn$, there exists an infinite abelian group in which the word problem is solvable in about the same time as the evaluation of $f^{-1}$, but the discrete logarithm problem is at least as hard as the evaluation of $f$. Loosely speaking, the ease of the word problem corresponds to the ability to easily compute group operations, so indeed, if $f$ is easy to invert and hard to compute, we have a tractable object in which DLP is as hard as desired. Up to the existence of one way functions, we generate controllably hard instances of DLP. 
~\\

Let $f: \nn \rightarrow \nn$ be a computable function. Let us denote by $T_f: \nn \rightarrow \nn$ the maximal number of steps that it takes to compute $f(m)$ for $1\leq m \leq \exp(n)$ on a fixed Turing machine that computes $f$. In other words, $T_f$ corresponds to the time complexity of computing $f$. Similarly, define $T_{f^{-1}}: \nn \rightarrow \nn$ is as $T_{f^{-1}}(n)$ is the maximum number of steps needed to verify whether or not $f(k)=l$ for any input $(k, l) \in \nn \times \nn$ satisfying $1\leq exp(kl) \leq n$ on a fixed Turing machine that solves the verification problem. In other words, $T_{f^{-1}}$ corresponds to the time complexity of the problem of verification of the correctness of the computation of $f$. We also say that an algorithmic problem $P_1$ is polynomial time equivalent to the problem $P_2$ if the time complexity for $P_1$ differs from the time complexity of $P_2$ at most by a polynomial factor. \\

Our main results obtained in this work are the following.
\begin{thm}[See Section \ref{section-the-embedding}]
\label{thm-dlp-from-f}
    For any computable function $f: \nn \rightarrow \nn$, there exists a two-generator group $G_f$ such that the discrete log problem in $G_f$can be solved in time $T_f$ and the word problem can be solved in time $ T_{f^{-1}},$ both up to polynomial factors. 
\end{thm}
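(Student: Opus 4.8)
The plan is to manufacture a group in which the exponent of a power of a fixed element $a$ is stored in a ``compressed'' form whose decompression is exactly the evaluation of $f$, while the internal bookkeeping needed to run the group operation only ever references $f$ on inputs small enough that checking them is an instance of the verification problem for $f$. First I would build a group $A_f$ (abelian, if one only wants the basic examples) carrying a distinguished element $a$ of infinite order together with a sequence of auxiliary generators $w_0, w_1, \dots$ and relations that force $w_k$ to represent $a^{f(k)}$; the relations are to be chosen so that the word problem of $A_f$ can be run \emph{symbolically} in $a$ and $w_0,\dots,w_r$, the actual value $f(k)$ being invoked only when $k$ is already forced to be small by the length of the input word. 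In effect $\langle a\rangle\cong\zz$ becomes a heavily distorted cyclic subgroup of $A_f$, with distortion profile governed by $f$. For an arbitrary computable $f$ I expect this step to proceed by simulating the fixed Turing machine for $f$ inside the group --- for instance through an ascending chain of HNN-extensions, or an $S$-machine, implementing one computation step at a time --- or else by appealing to the known constructions of finitely generated groups with a prescribed cyclic distortion function and prescribed word-problem complexity.

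With $A_f$ in hand I would analyse the two algorithmic problems. For the word problem: a reduced word of length $n$ can only mention the symbols $w_k$ with $k$ bounded in terms of $n$ (this is precisely what ``$\langle a\rangle$ is very distorted'' buys), and triviality of such a word reduces to a bounded list of equalities $f(k)=\ell$ with $k,\ell$ constrained by $n$ exactly as in the definition of $T_{f^{-1}}$; deciding each of these is an instance of verification of $f$, so the whole test runs in time $T_{f^{-1}}(n)$ up to a factor polynomial in $n$. For the discrete logarithm problem with base $a$: given $h$ by a word of length $n$, use the word problem to certify $h\in\langle a\rangle$, and then recover the exponent $m$ with $h=a^m$ by evaluating $f$ on the (now explicit) arguments named in the word and combining them --- this halts in time $T_f(n)$ up to a polynomial factor. (That it genuinely cannot be done much faster in the worst case, which is what yields the ``arbitrarily hard'' phenomenon, is established with the same encoding but is not needed for the present upper-bound statement.) One must check that the auxiliary generators introduce no unintended shortcuts in either problem; this is why the relations must pin down exactly which elements of $\langle a\rangle$ admit short representatives.

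Finally, $A_f$ is a priori only countably --- possibly infinitely --- generated, so to reach the two-generator conclusion I would embed $A_f$ into a two-generator group $G_f$ via a Higman--Neumann--Neumann style construction: every countable group embeds into a two-generator group, and one can make the embedding explicit enough that a word of $A_f$ turns into a word of $G_f$ of polynomially comparable length, and conversely, so that the complexities of the word problem and of the discrete logarithm problem (for the image of $a$) are preserved up to polynomial factors. Taking the image of $a$ together with the extra stable letter as the two generators of $G_f$ then gives the statement.

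The step I expect to be the real obstacle is the asymmetry between the two complexities. Almost any naive way of forcing $w_k=a^{f(k)}$ also forces the word problem to evaluate $f$ on large arguments, which collapses $T_{f^{-1}}$ back into $T_f$; so the encoding has to be arranged so that a length-$n$ relation can only witness $f$-values on inputs whose size is logarithmic (or doubly logarithmic) in $n$ --- matching the input bound built into $T_{f^{-1}}$ --- while a single discrete-log query can still compel the full evaluation of $f$ on an input of size $n$. Making the Turing-machine simulation, the distortion bookkeeping, and the two-generator embedding all respect these size constraints simultaneously, with only polynomial overhead throughout, is the technical core of the argument.
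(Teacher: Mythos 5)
Your outline matches the paper's high-level architecture (a countable abelian group carrying the hardness, then an effective embedding into a two-generator group of Neumann type with polynomial length control), but the proposal has a genuine gap at exactly the point you yourself flag as ``the real obstacle'': you never produce an encoding that achieves the asymmetry between $T_f$ and $T_{f^{-1}}$. In your version there is a single distinguished element $a$ with auxiliary generators forced to satisfy $w_k=a^{f(k)}$, so that $\langle a\rangle$ is a distorted copy of $\zz$. But then a word such as $a^{k_0}w_{n_1}^{l_1}w_{n_2}^{l_2}$ is trivial iff $k_0+l_1f(n_1)+l_2f(n_2)=0$, and deciding this is not a ``bounded list of equalities $f(k)=\ell$ with $k,\ell$ read off the input'': the candidate values of $f$ are unknown, so a verification oracle does not suffice and the word problem collapses back toward $T_f$, exactly the failure mode you describe. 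Deferring this to Turing-machine simulation, HNN-extensions or S-machines, or ``known constructions with prescribed distortion'' does not close the gap, because none of those off-the-shelf results controls the word problem by the \emph{verification} complexity of $f$ while keeping the discrete log as hard as \emph{computing} $f$; that dual control is the whole content of the theorem.

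The paper's resolution is much more elementary and avoids distorting a single cyclic subgroup altogether: it takes $A_f=\langle a_1,a_2,\ldots,b_1,b_2,\ldots\mid [a_i,a_j],[a_i,b_j],[b_i,b_j],\,a_i^{f(i)}=b_i\rangle$, i.e.\ one \emph{independent} pair $(a_i,b_i)$ per index, so the group splits as a direct sum of rank-one pieces. Triviality of a reduced word is then checked index by index, and for each index the only question is whether $f(n)=-k/l$ for exponents $k,l$ visible in the input --- a pure verification instance, giving the $T_{f^{-1}}$ bound for the word problem --- while the discrete log instances $a_n^x=b_n$ have the unique solution $x=f(n)$ and hence force full evaluation of $f$ (and conversely, Lemma \ref{lem-complexity-disc-log-in-Af} shows any instance is solvable once the relevant values $f(n_i)$ are computed). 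Note also that the discrete log here is taken with respect to a family of bases $a_n$, not one fixed base $a$ as in your sketch. Finally, for the two-generator step you would still need what the paper proves in Lemmas \ref{lem-Psi-is-effective}--\ref{lem-on-reduced-form} and the subsequent analysis via $\pi_s$ and the degree maps: an explicit canonical form in $G_f=\langle F,s\rangle$ showing that both the word problem and the discrete log problem in $G_f$ reduce back to $A_f$ with only polynomial overhead; ``every countable group embeds in a two-generator group'' by itself does not rule out the embedding making the discrete log easier or the word problem harder.
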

The next theorem can be regarded as a corollary from Theorem \ref{thm-dlp-from-f}.
\begin{thm}[Corollary \ref{cor-np-complete-discrete-log}]
\label{thm-main-on-NP-DLP}

There exist two-generated groups with polynomial time word problem and NP-hard discrete log problem.
\end{thm}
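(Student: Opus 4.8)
The plan is to deduce the statement from Theorem~\ref{thm-dlp-from-f} by applying it to a suitably chosen function. Fix an NP-complete language $A$ (for concreteness, the satisfiability problem) together with a standard polynomial-time bijection between $\nn$ and the relevant finite objects, and define $f\colon\nn\to\nn$ by setting $f(k)=1$ if the object coded by $k$ lies in $A$, and $f(k)=0$ otherwise. The two facts to establish about $f$ are: (i) $T_{f^{-1}}$ is bounded by a polynomial, and (ii) evaluating $f$ is NP-hard (while $T_f(n)\le 2^{O(n)}$). For (i), note that, by the definition of $T_{f^{-1}}$, a value $f(k)$ only ever needs to be verified for pairs $(k,l)$ in the window $\exp(kl)\le n$; hence the instance of $A$ involved has size $O(\log\log n)$ and can be decided by brute force in time $\mathrm{poly}(n)$. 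Fact (ii) is immediate, since deciding $A$ is the case $l=1$ of evaluating $f$.

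Applying Theorem~\ref{thm-dlp-from-f} to this $f$ then yields a two-generator group $G_f$ in which the word problem is solvable in time $\mathrm{poly}(T_{f^{-1}})$, hence in polynomial time, and in which the evaluation of $f$ reduces, in polynomial time, to the discrete logarithm problem. Since $f$ is Boolean-valued, deciding $A$ thereby many-one-reduces to the discrete logarithm problem in $G_f$, which is hence NP-hard. For NP-membership, one uses that the discrete logarithm of an instance of size $N$, when it exists, has bit-length polynomial in $N$ (a property of the explicit construction, and what keeps the problem inside NP for this tame choice of $f$): such an exponent $k$ is a polynomial-size certificate, and $g^k=h$ is verified by computing $g^k$ with $O(\log k)$ group multiplications via repeated squaring and comparing the result with $h$ using the polynomial-time word problem, the intermediate elements remaining polynomially bounded since they are handled in the normal form supplied by that algorithm. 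Hence the discrete logarithm problem in $G_f$ is NP-complete and $G_f$ is two-generated with polynomial-time word problem.

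The crux — and the step needing the most care — is the coexistence of (i) and (ii): that evaluating $f$ is NP-hard while \emph{verifying} outputs of $f$ is polynomial-time. This is consistent precisely because of the asymmetry in the definitions of $T_f$ and $T_{f^{-1}}$: evaluation is clocked on inputs as large as $\exp(n)$, whereas verification is clocked only on the far smaller pairs $(k,l)$ with $\exp(kl)\le n$ — inputs exponentially smaller than the scale at which the word problem of $G_f$ is being measured. The secondary point, NP-membership of the discrete logarithm problem, relies on the explicit form of $G_f$ delivered by Theorem~\ref{thm-dlp-from-f}, and in particular on the polynomial bound on the bit-length of the relevant exponents.
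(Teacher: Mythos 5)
Your choice of $f$ --- the $0/1$ characteristic function of an NP-complete language --- breaks the half of the statement concerning the word problem. Your claim (i) rests on reading the introduction's definition of $T_{f^{-1}}$ literally, with the window $1\le \exp(kl)\le n$; but that reading is inconsistent with how $T_{f^{-1}}$ is actually used in the construction (Lemma \ref{lem-transformation-into-reduced-word}, Corollaries \ref{cor-wp-in-A_f} and \ref{cor-complexity-WP-in-Af}). The length function $\ell$ is \emph{logarithmic} in the indices and exponents, so a word of $\ell$-length $n$ involves generators $a_k,b_k$ and exponents with $k,l$ as large as $\exp(n)$, and the word-problem algorithm calls the verifier on pairs $(k,l)$ of bit-length $\Theta(n)$; what is needed is that ``$f(k)=l$'' be checkable in time polynomial in $\log k+\log l$. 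For your Boolean $f$ the claimed value $l\in\{0,1\}$ carries no witness, so verifying $f(k)=l$ is exactly deciding membership of the instance coded by $k$, whose size is of the same order as the input. Concretely, $\Psi(a_k)\Psi(b_k)^{-1}=1$ in $G_f$ if and only if $f(k)=1$, and this input has $\ell$-length $\Theta(\log k)$, i.e.\ $\Theta$(instance size); hence the word problem of your $G_f$ is NP-hard rather than polynomial-time (unless $\mathrm{P}=\mathrm{NP}$). The ``crux'' you highlight --- that verification is clocked on exponentially smaller inputs than evaluation --- is an artifact of a garbled formula in the introduction, not a feature of the construction the theorem is proved from.

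The paper's own proof avoids this by imposing exactly the hypothesis your $f$ violates: it takes $f$ to be NP-hard to compute but \emph{self-certifying}, meaning that for a pair $(n,m)$ one can verify $f(n)=m$ in time polynomial in the bit-length of the pair (think of a witness-producing function for an NP-complete relation, not its $0/1$ decision version). For such $f$, Corollaries \ref{cor-wp-in-A_f} and \ref{cor-complexity-WP-in-Af} give the polynomial-time word problem in $G_f$, and the linear-time equivalence of the discrete log problem in $G_f$ with the computation of $f$ gives NP-completeness of the discrete log problem. Your second step --- NP-membership via repeated squaring, the word-problem oracle, and a polynomial bound on the bit-length of the exponent --- is reasonable and in fact more detailed than what the paper writes, but it does not repair the construction: to fix the proof you must replace your $f$ by a function whose graph is decidable in time polynomial in the bit-length of $(n,m)$ while $f$ itself is NP-hard to evaluate, which is precisely the assumption made in Corollary \ref{cor-np-complete-discrete-log}.
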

\subsection{Related work.}
For a survey on the classical difficulty of solving DLP in a generic group, in the multiplicative group of a finite field, and in an elliptic curve, we recommend the survey article \cite{joux2014past}. 

There are some papers describing concrete instances of groups in which the generic lower bound is violated (or at least, make progress towards demonstrating this); that is, extra information about the group allows one to give an algorithm \textit{upper-bounded} asymptotically below the Shoup bound. Important examples of this type of work include \cite{lamacchia1991computation, barbulescu2014heuristic, mov}. Nevertheless, as far as we are aware, there are no examples in the literature of instances of concrete groups in which the lower bound for DLP is improved compared to the Shoup bound.

\subsection{The main premise  behind the group theoretical constructions in our paper} In the current paper we present several cryptographic protocols implemented in a setting of finitely generated infinite groups. To achieve this implementation, we start with (abelian) groups of countable rank, then describe constructions that allow us to carry essential algorithmic and cryptographic information that presentations of those abelian groups carry into the setting of finitely generated groups. The main conceptual premise is that for groups that are not finitely generated, most of the natural algorithmic properties are not intrinsic for groups themselves but intrinsic for particular enumerated presentations (i.e. enumerations of their elements), whereas for finitely generated groups those properties can be regarded as intrinsic properties of the groups themselves. An example of such an algorithmic property is the word problem in groups. 

\subsection{Quantum resistance.}

In the current landscape of the first standardised primitives believed to be quantum-resistant, a natural question to ask is the following: how resistant to quantum attack do we expect the DLP to be in the groups we have described? This is especially pertinent in light of the high-profile cryptanalytic surprises that emerged on the road to standardisation: in particular the recent attacks \cite{DBLP:journals/iacr/Beullens22,DBLP:conf/crypto/TaoPD21,DBLP:journals/iacr/BaenaBCPSV21} against the round-$3$ DSSs Rainbow \cite{DBLP:journals/iacr/Beullens22} and GEMSS \cite{casanova2017gemss}, as well as the cryptanalysis of round-$4$ isogeny-based KEM known as SIKE \cite{cryptoeprint:2022/975, cryptoeprint:2022/1026}. In other words, despite the completion of the standardisation process, there remains motivation to study a variety of candidate quantum-hard problems.

This work does not address the quantum resistance of DLP in the groups we introduce. On the one hand, the group we work in is somehow defined with respect to a group whose elements all have finite order, and so we would expect some Shor-like method period-finding to apply; on the other hand, the actual group $G_f$ in which the difficulty of DLP is to be analysed has elements of infinite order, and so the standard quantum techniques might not be expected to work. In order to settle the question, one would have to understand how the difficulty of the problem in these two groups is related from a quantum perspective, as well as understanding how quantum algorithms interact with symbolic algebra. These problems are considered rich enough to warrant further work. Indeed, during the preparation of this manuscript, a work giving detailed analysis of the hidden subgroup problem for infinite groups appeared on the arXiv \cite{kuperberg2025hidden}, so certainly there is interest in this kind of problem in the community.

 \section{Enumerated abelian groups and algorithmic problems on them}
\label{section-A_f}

Let 
\begin{align}
\label{eq-presentation-A}
    A = \langle a_1, a_2, \ldots \mid [a_i, a_j], i, j \in \nn; R \rangle
\end{align} 
be an abelian group with an enumerated set of generators $\{a_1, a_2, \ldots \}$, the set of relators $[a_i, a_j]:=a_ia_ja_i^{-1}a_j^{-1}$ and an additional set of relators denoted by $R$. In this work, we describe how algorithmic properties of the presentation \eqref{eq-presentation-A} can be carried out to a two-generated groups, making those properties intrinsic for the group itself and not just of its presentation. As main applications, we describe a family of finitely generated groups for which the Discrete Log Problem is polynomially equivalent to the computation of any function $f: \nn \rightarrow \nn$ fixed beforehand. This is carried out in Sections \ref{section-the-embedding}.

Next, in Section \ref{section-new-protocol}, we describe how this idea can be implemented in cryptographic setting by describing a general scheme of a key-exchange protocol that encapsulates properties of commutative actions of groups and one-way functions via encodings in two-generated infinite groups.

\subsection{The Discrete Log problem in countable abelian groups}
\label{subsection-basic-obs}
 Let $f: \nn \rightarrow \nn$ be a fixed computable map. In this subsection, we are going to describe a presentation of a countable abelian group for which the Discrete Log problem is polynomially equivalent to the computation of $f(n)$.

 By $T_f: \nn \rightarrow \nn$ we denote the time complexity of computing the values $f: \nn \rightarrow \nn$ on a fixed Turing machine. That is, $T_f(n)$ indicates the number of steps the machine takes to output $f(n)$ after input $n$.

 Similarly, by $T_{f^{-1}}: \nn \rightarrow \nn$ we denote the time complexity on a fixed Turing machine for the following problem: on input the pair $(n, m) \in \nn \times \nn$ is given. Verify whether or not $f(n)=m$.

Define
 
\begin{align}
\label{eq-presentation-A-for-DiscreteLog}
    A_f: = \langle a_1, a_2, \ldots, b_1, b_2, 
    \ldots \mid [a_i, a_j], [a_i, b_j], [b_i, b_j], a_i^{{f(i)}}=b_i, i, j \in \nn \rangle.
\end{align} 

Note that each element of $A_f$ can be represented as a finite word of the form 
\[
w = a_{n_1}^{k_1} \ldots a_{n_t}^{k_t} b_{m_1}^{l_1} \ldots b_{m_s}^{l_s} \in \{ a_1^{\pm 1}, a_2^{\pm 1}, \ldots, b_1^{\pm 1}, b_2^{\pm 1}, \ldots \}^*.
\]

Also, note that each term $a_{n_i}^{k_i}$ can be encoded as a string of $0$'s and $1$'s of length $\Theta(\log(|n_ik_i|)$. Similarly, the term $b_{m_j}^{l_j}$ would require a string of size $\Theta(\log(|m_jl_j|)$. Thus, we define the length function
$${\ell}:\{ a_1^{\pm 1}, a_2^{\pm 1}, \ldots, b_1^{\pm 1}, b_2^{\pm 1}, \ldots \}^*\rightarrow \nn \cup \{0\} $$ as follows:
$$\ell(w):= \sum_{i=1}^{t} \log(|n_ik_i|) + \sum_{j=1}^{s} \log(|m_jl_j|) .$$

\begin{df}[Reduced word]
    We say that $w = a_{n_1}^{k_1} \ldots a_{n_t}^{k_t} \in \{ a_1^{\pm 1}, a_2^{\pm 1}, \ldots \}^*$ is in \emph{a reduced form} if $w$ is the empty word or $n_1, \ldots, n_t$ and $m_1, \ldots,  m_s$ are pair-wise not-equal, respectively, and $k_1, \ldots, k_t$, $l_1, \ldots, l_s$ are non-zero integers, and  it does not contain terms $a_{n}^{k}$ and $b_{n}^{l}$ such that $k = -l {f(n)}.$
\end{df}

\begin{lem}
    \label{lem-transformation-into-reduced-word}
    Any word $w  \in \{ a_1^{\pm 1}, a_2^{\pm 1}, \ldots, b_1^{\pm 1}, b_2^{\pm 1}, \ldots \}^*$ can be transformed into a reduced word $w'$ that represents the same element in $A_{f}$ as  $w$ in time $\calO((l(w))^2+l(w)) T_{f^{-1}}(l(w)) $.
\end{lem}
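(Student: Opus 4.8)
The plan is to run a ``collect-and-cancel'' normalisation on $w$, using that $A_f$ is abelian and that the only non-commutator relations $a_i^{f(i)}=b_i$ need to be \emph{invoked} --- as opposed to having their instances $a_i^{f(i)}$ actually computed --- precisely when two syllables annihilate one another. \textbf{Step 1 (collecting syllables).} Parse $w$ into its syllables $a_n^{k}$, $b_m^{l}$ (reading $w$ in the term-wise binary encoding fixed above), of which there are $\calO(\ell(w))$. Using commutativity of all the generators, sort the syllables by the key $(\text{index},\text{type}\in\{a,b\})$ and add the exponents of syllables with equal key, discarding any whose exponent collapses to $0$; a radix sort on the binary keys followed by the exponent additions costs $\calO(\ell(w)^2)$ in total (each exponent has bit-length at most $\ell(w)$, and there are $\calO(\ell(w))$ of them). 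Only commutator relations have been used, so the output
\[
w_1 = a_{n_1}^{k_1}\cdots a_{n_t}^{k_t}\, b_{m_1}^{l_1}\cdots b_{m_s}^{l_s}
\]
represents the same element of $A_f$ as $w$, with the $n_i$ pairwise distinct, the $m_j$ pairwise distinct, and all exponents nonzero.

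\textbf{Step 2 (deleting exact $a$--$b$ cancellations).} It remains to destroy every pair $a_n^{k}$, $b_n^{l}$ in $w_1$ with $k=-l\,f(n)$. For each index $n$ occurring both among the $n_i$ (say as $a_n^{k}$) and among the $m_j$ (say as $b_n^{l}$) --- there are $\calO(\ell(w))$ of them, readable off the sorted list from Step 1 --- test this identity \emph{without evaluating $f$}: check by integer division whether $l\mid k$, and if so put $q:=-k/l$; if $q$ is a positive integer, run the fixed Turing machine underlying $T_{f^{-1}}$ to decide whether $f(n)=q$. If it answers yes, delete both $a_n^{k}$ and $b_n^{l}$; otherwise keep them. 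Since $|q|\le|k|$, the input $(n,q)$ has bit-length at most $\log|nk|\le\ell(w)$, so each such verification costs at most $T_{f^{-1}}(\ell(w))$, and over the $\calO(\ell(w))$ queries this contributes $\calO(\ell(w)\,T_{f^{-1}}(\ell(w)))$. Let $w'$ be the resulting word.

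\textbf{Correctness and running time.} Every operation performed --- permuting syllables, merging powers of a single generator, dropping a trivial syllable, and replacing a pair $a_n^{k}b_n^{l}$ with $k=-l f(n)$ (that is, $a_n^{k}(a_n^{f(n)})^{l}=a_n^{k+l f(n)}=a_n^{0}$) by the empty word --- is a consequence of the defining relations of $A_f$, so $w'$ represents the same element of $A_f$ as $w$; and the Step 2 deletions create neither repeated indices nor zero exponents, so $w'$ is reduced. Adding the two estimates gives $\calO(\ell(w)^2+\ell(w)\,T_{f^{-1}}(\ell(w)))$, which (as $T_{f^{-1}}\ge 1$) is at most $\calO\bigl((\ell(w)^2+\ell(w))\,T_{f^{-1}}(\ell(w))\bigr)$, as claimed.

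The crux, and really the entire point of the lemma, is Step 2: the procedure is granted only the \emph{verification} oracle of cost $T_{f^{-1}}$, never an evaluator for $f$ (whose cost $T_f$ can be incomparably larger), and the notion of ``reduced word'' has been shaped precisely so as to be reachable with the former --- note in particular that one deliberately does \emph{not} merge $a_n^{k}$ and $b_n^{l}$ into $a_n^{k+l f(n)}$, since that would require computing $f(n)$. The observation that unlocks this is that detecting an exact cancellation $k=-l f(n)$ never requires knowing $f(n)$: a candidate value $q=-k/l$ is produced by cheap division, and a single verification query then settles it. What remains is bookkeeping --- checking that each query's input stays within the budget $T_{f^{-1}}(\ell(w))$ and that the number of syllables and the sizes of all exponents are controlled by $\ell(w)$ --- which is routine.
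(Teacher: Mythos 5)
Your proposal is correct and follows essentially the same route as the paper's proof: first collect and merge syllables using commutativity in $\calO(\ell(w)^2)$ time, then detect exact $a$--$b$ cancellations via at most $\calO(\ell(w))$ calls to the verification procedure of cost $T_{f^{-1}}(\ell(w))$ each. Your treatment is in fact slightly more explicit than the paper's (producing the candidate value $q=-k/l$ by division before querying the verifier, and bounding the query size by $\ell(w)$), but these are refinements of, not departures from, the paper's argument.
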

\begin{proof}
    Indeed, given $w  \in \{ a_1^{\pm 1}, a_2^{\pm 1}, \ldots, b_1^{\pm 1}, b_2^{\pm 1}, \ldots \}^*$, simply by transposition of terms $a_i^{\pm 1}, b_i^{\pm 1}$, first we can transform it into a word $w'=  a_{n_1}^{k_1} \ldots a_{n_t}^{k_t} b_{m_1}^{l_1} \ldots b_{m_s}^{l_s}$ that would satisfy the condition that  $n_1, \ldots, n_t$ and $m_1, \ldots,  m_s$ are pair-wise not-equal, respectively, and $k_1, \ldots, k_t$, $l_1, \ldots, l_s$ are non-zero integers. This can be done in time $\calO((l(w))^2$. 

    Therefore, without loss of generality, we can assume that $w$ satisfies  the condition that  $n_1, \ldots, n_t$ and $m_1, \ldots,  m_s$ are pair-wise not-equal, respectively, and $k_1, \ldots, k_t$, $l_1, \ldots, l_s$ are non-zero integers. In this case, to check if $w$ is in a reduced form, one can check for each term $a_{n_i}^{i}$, $i=1, \ldots, t$, if there is a term $b_{m_j}^{l_j}$, $j=1, \ldots, s$, such that $n_i = m_j$ and $n_i = -m_i {f(m_j)}$. This step can be done in time $\calO(l(w)) T_{f^{-1}}(l(w)) $. Therefore, the assertion of the lemma follows.
\end{proof}

\begin{cor}
\label{cor-wp-in-A_f}
    The word problem in $A_f$ is solvable in time $\calO(n^2 T_{f^{-1}}(n)) $, where $n$ is the length of the input word with respect to the length function $l$.
\end{cor}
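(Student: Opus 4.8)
The plan is to read the corollary off Lemma~\ref{lem-transformation-into-reduced-word} once we know that the empty word is the \emph{only} reduced word representing the identity of $A_f$. Concretely, given an input word $w$ with $n = l(w)$, first apply Lemma~\ref{lem-transformation-into-reduced-word} to compute a reduced word $w'$ with $w' =_{A_f} w$; this takes time $\calO\bigl((l(w))^2 + l(w)\bigr)\,T_{f^{-1}}(l(w))$, which is $\calO\bigl(n^2\,T_{f^{-1}}(n)\bigr)$ since $n^2 + n = \calO(n^2)$. Then test whether $w'$ is the empty word: this costs time linear in the length of $w'$, hence is absorbed into the previous bound (the algorithm of the lemma must in any case write $w'$ down). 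Output ``$w =_{A_f} 1$'' if and only if this test succeeds. Thus the whole procedure runs in time $\calO(n^2 T_{f^{-1}}(n))$, and it remains to justify correctness.

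For correctness I would use the structure of $A_f$. Eliminating each $b_i$ via the relator $a_i^{f(i)} = b_i$, the group $A_f$ is isomorphic to $\langle a_1, a_2, \ldots \mid [a_i,a_j],\ i,j\in\nn\rangle$, i.e.\ the free abelian group $\bigoplus_{i\in\nn}\zz$. Rewriting a reduced word $w' = a_{n_1}^{k_1}\cdots a_{n_t}^{k_t} b_{m_1}^{l_1}\cdots b_{m_s}^{l_s}$ using $b_{m_j} = a_{m_j}^{f(m_j)}$, the exponent of a generator $a_n$ in the resulting expression is $k + l\,f(n)$, where $a_n^{k}$ and $b_n^{l}$ are the (at most one each) terms of $w'$ with index $n$, with $k$ or $l$ taken to be $0$ if the corresponding term is absent. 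If $w'$ is reduced and nonempty, pick an index $n$ that occurs; then $k\neq 0$ or $l \neq 0$, and since $f$ is positive-valued, $k + l\,f(n) = 0$ would force $k = -l\,f(n)$ — the pair explicitly forbidden in the definition of a reduced word when both terms occur, and impossible when only one occurs (as then $k \neq 0$, or $l\,f(n) \neq 0$). Hence $a_n$ gets a nonzero exponent and $w' \neq_{A_f} 1$. Since $w' =_{A_f} w$, we conclude $w =_{A_f} 1$ if and only if $w'$ is empty, as needed.

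The only real subtlety is this last structural point — that no nonempty reduced word collapses to $1$. It rests on identifying $A_f$ with a free abelian group after discarding the redundant generators $b_i$, and on $f$ taking positive values: if $f(i)=0$ for some $i$, then $b_i =_{A_f} 1$ and the single-letter word $b_i$ would be a nonempty reduced representative of the identity, so the ``reduce and check emptiness'' strategy would need adjusting. Everything else — invoking the lemma, the emptiness test, and collapsing $\calO\bigl((l(w))^2 + l(w)\bigr)T_{f^{-1}}(l(w))$ to $\calO\bigl(n^2 T_{f^{-1}}(n)\bigr)$ — is routine bookkeeping.
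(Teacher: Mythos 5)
Your proposal is correct and follows essentially the same route as the paper: apply Lemma \ref{lem-transformation-into-reduced-word} to reach a reduced form and then observe that a reduced word represents the identity of $A_f$ if and only if it is empty. In fact you flesh out the "empty iff trivial" claim (via eliminating the $b_i$ and checking the exponent $k+l\,f(n)$), which the paper simply asserts, and your caveat about $f$ taking positive values is consistent with the paper's convention $f:\nn\to\nn$.
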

\begin{proof}
    It is enough to notice that for $w \in \{ a_1^{\pm 1}, a_2^{\pm 1}, \ldots, b_1^{\pm 1}, b_2^{\pm 1}, \ldots \}^*$ given in a reduced form, it represents the trivial element of $A_f$ if and only if $w$ is the empty word. Then, the claim of the corollary follows from Lemma \ref{lem-transformation-into-reduced-word}.
\end{proof}
\begin{lem}
    \label{lem-complexity-disc-log-in-Af}
    The Discrete Log problem in $A_f$ with respect to the presentation \eqref{eq-presentation-A-for-DiscreteLog} of $A_f$ is polynomially equivalent to the computation of $f: \nn \rightarrow \nn$.
\end{lem}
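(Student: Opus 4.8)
The plan is to establish the two directions of the polynomial equivalence separately. First, to reduce the computation of $f$ to the Discrete Log problem in $A_f$: given $n \in \nn$, I would feed the instance $(a_n, b_n)$ to a Discrete Log oracle, asking for the integer $k$ such that $a_n^k = b_n$ in $A_f$. Since the only relators involving $a_n$ and $b_n$ are the commutators and the single relation $a_n^{f(n)} = b_n$, and since $A_f$ is abelian with the $a_i$'s of infinite order (one should note the subgroup generated by $a_n$ alone is infinite cyclic, which follows from the reduced-form analysis in Lemma \ref{lem-transformation-into-reduced-word}), the unique such $k$ is exactly $f(n)$. The encoding of the instance $(a_n, b_n)$ has length $\Theta(\log n)$, so this is a polynomial-time (indeed, near-trivial) reduction, and it shows $T_f$ is bounded by a polynomial in the Discrete Log complexity.

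For the converse — reducing Discrete Log in $A_f$ to the computation of $f$ — I would proceed as follows. Given a Discrete Log instance $(u, v)$ with $u, v \in \{a_1^{\pm1}, a_2^{\pm1}, \ldots, b_1^{\pm1}, b_2^{\pm1}, \ldots\}^*$, first use Lemma \ref{lem-transformation-into-reduced-word} to put both $u$ and $v$ into reduced form; note that by Corollary \ref{cor-wp-in-A_f} this costs only $\mathcal{O}(n^2 T_{f^{-1}}(n))$, and $T_{f^{-1}}(n)$ is itself bounded by a polynomial in $T_f$ composed with an exponential in the input length is handled by the convention that we are working up to polynomial factors and $T_{f^{-1}}$ measures verification — so this preprocessing is within budget once we allow $f$-evaluations. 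In reduced form, write $u = a_{n_1}^{k_1}\cdots a_{n_t}^{k_t} b_{m_1}^{l_1}\cdots b_{m_s}^{l_s}$ and similarly for $v$. Since $A_f$ is the direct sum over $i$ of the rank-one abelian groups $\langle a_i, b_i \mid a_i^{f(i)} = b_i\rangle$ (each isomorphic to $\zz$ via $a_i \mapsto 1$, $b_i \mapsto f(i)$), the equation $u^x = v$ decouples into one linear equation per index $i$ appearing in $u$ or $v$. For each such $i$, I would compute $f(i)$ (here is where the $f$-oracle is used), convert the $a_i, b_i$ exponents of $u$ and of $v$ into a single integer exponent ($k + l\,f(i)$ for a term $a_i^k b_i^l$), obtaining integers $c_i$ (from $u$) and $d_i$ (from $v$), and then the instance is solvable iff there is a single $x \in \zz$ with $x c_i = d_i$ for all $i$; this $x$ is determined by any index with $c_i \neq 0$ and then checked against the rest. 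The number of indices is $\mathcal{O}(n)$ and each $f$-evaluation is on an input of size $\mathcal{O}(\log n) \leq \mathcal{O}(n)$, so the total cost is polynomial in $n$ plus $\mathcal{O}(n)$ evaluations of $f$ on inputs bounded by $\exp(n)$, i.e.\ bounded by a polynomial factor times $T_f(n)$.

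The main obstacle — and the step deserving the most care — is justifying the structural claim that $A_f$ decomposes as the direct sum $\bigoplus_i \langle a_i, b_i \mid a_i^{f(i)} = b_i \rangle$ with each summand infinite cyclic, and in particular that the reduced form of a word is unique and that the $a_i$ have infinite order. Without this, the "decoupling into one linear equation per index" step is not valid, and one could not conclude that the discrete log, when it exists, is unique or correctly read off from a single coordinate. I would obtain this from the explicit normal form: using Lemma \ref{lem-transformation-into-reduced-word} together with the observation that two reduced words representing the same element must be literally equal (which one proves by mapping $A_f$ onto the obvious direct sum of copies of $\zz$ and checking the map is injective on reduced words, hence an isomorphism). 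Once the direct-sum structure is in hand, the bookkeeping in both reductions is routine, and the polynomial-factor claims follow from the length estimates already recorded in the excerpt.
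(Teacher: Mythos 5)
Your proposal follows essentially the same route as the paper: the lower bound comes from the instance $a_n^x=b_n$ with unique solution $x=f(n)$, and the upper bound comes from reducing $u$ and $v$ via Lemma \ref{lem-transformation-into-reduced-word}, evaluating $f$ at the finitely many indices that occur, and decoupling $u^x=v$ into one integer linear equation per index; the paper's formula $x=\frac{f(n_1)l_1'+k_1'}{f(n_1)l_1+k_1}$ is exactly your ``total exponent'' quotient $d_1/c_1$, followed by the same check against the remaining indices.

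One caveat: your auxiliary claim that two reduced words representing the same element of $A_f$ must be literally equal is false with the paper's notion of reduced form --- for instance $a_n^{f(n)}$ and $b_n$ are both reduced and equal in $A_f$ --- so the proposed proof ``injective on reduced words'' cannot work as stated. This does not damage your reduction, because all you actually need is the isomorphism $A_f\cong\bigoplus_i\zz$ given by $a_i\mapsto e_i$, $b_i\mapsto f(i)e_i$ (so that the totals $c_i=k+l\,f(i)$ and $d_i$ are well defined on group elements, independently of the chosen reduced representative), and that isomorphism is immediate since the $a_i$ alone generate $A_f$ and the relations are respected in both directions; with that substitution your argument is sound and matches the paper's in substance.
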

\begin{proof}
    Indeed, let us consider an equation $u^x = v$, where $x \in \zz$ is the unknown, $u, v \in A_f$. Without loss of generality, assume that $u$ is given in a reduced form as
    $u =  a_{n_1}^{k_1} \ldots a_{n_t}^{k_t} b_{m_1}^{l_1} \ldots b_{m_s}^{l_s} \in \{ a_1^{\pm 1}, a_2^{\pm 1}, \ldots, b_1^{\pm 1}, b_2^{\pm 1}, \ldots \}^*$. Then, it follows from the definition of $A_f$ that if a reduced form of $v$ does not contain a generator $a_{n_i}$ or $b_{m_j}$ for $i=1,\ldots, t$, $j=1, \ldots, s$, or it contains a generator different from those, then $u^x = v$ has no solution. Therefore, without loss of generality, let us assume that $v$ is given in a reduced form as 
    $v =  a_{n_1}^{k'_1} \ldots a_{n_t}^{k'_t} b_{m_1}^{l'_1} \ldots b_{m_s}^{l'_s}.$

    By introducing dummy multipliers of the form $a_n^0$ or $b_m^0$, without loss of generality, we can assume $(n_1, \ldots, n_t) = (m_1, \ldots, m_s)$ and the exponents of the terms are non-necessarily non-zero, which is  the only deviation from the definition of the reduced form. 

    Under this convention, let 
    $u =  a_{n_1}^{k_1} \ldots a_{n_t}^{k_t} b_{n_1}^{l_1} \ldots b_{n_t}^{l_t}$ and 
    $v =  a_{n_1}^{k'_1} \ldots a_{n_t}^{k'_t} b_{n_1}^{l'_1} \ldots b_{n_t}^{l'_t}.$ Then, for each $i = 1, \ldots, t$, $x$ must satisfy the system of equations
\begin{align}
    \left\{
                      \begin{array}{ll}
                       \lfloor \frac{k_ix}{f(n_i)} \rfloor +l_i x = l_i' & \\
                       k_i x - \lfloor \frac{k_ix}{f(n_i)} \rfloor f(n_i) = k_i'  & \mbox{.}
                     \end{array}
                    \right.
  \end{align}
  Note that for each $i=1, \ldots, t$, this system can have at most one solution. Therefore, it is enough to find $x$ when $i=1$ then check if it satisfies the systems of equations for $i=2, \ldots, t$. Moreover, this $x$ can be found by formula $x = \frac{f(n_1) l_1' + k_1'}{f(n_1)l_1+k_1}.$ Therefore, assuming that $f(n_1), \ldots, f(n_t)$ are found, one can find $x$ or show that such $x$ does not exist in time $\calO(t\log(f(n_1)l_1l_1'k_1k_1')).$ This implies that finding $x$ or showing that $x$ does not exist can be done in time $\calO((\ell(u)+\ell(v))T_{\ell(u)+\ell(v)}).$

  On the other hand, consider the equation $a_n^x = b_n$. The solution of this equation is $x=f(n)$. This implies that the Discrete Log Problem in $A_f$ is at least as difficult as computing $f$. This completes the proof.

\end{proof}

 \section{Finitely generated groups with controllable Discrete Log Problem via group embeddings}
 \label{section-the-embedding}
 In this section, we are going to present an embedding construction of the abelian group $A_f$ from the previous section into a two-generated group $G_f$ such that the Discrete Log Problem for $G_f$ is polynomially equivalent to the computation of $f: \nn \rightarrow \nn$. The construction is an adaptation of a famous embedding construction of Neumann \cite{neumann_embedding_1960} (see also constructions from \cite{darbinyan_group_2015, arman_new}). Throughout this section,  let $f: \nn \rightarrow \nn$ be a fixed map as in the previous section, and $A_f$ is as in Section \ref{section-A_f}.

\subsection{The general idea}

Let $G_{f} = \langle x, y \rangle$ be a two generated group (the index $f$ does not play a role so far, but it's context will be clear later). Then,
for each word from $\{x^{\pm 1}, y^{\pm 1}\}^*$, given as
$w: = x^{k_0}y^{l_1}x^{k_1} \ldots y^{l_t} x^{k_t}y^{l_{t+1}}$, where $k_i, l_i \neq 0$ for $1\leq i \leq t$, we define its length $\ell(w)$, as
\[
\ell(w) = \log(|k_0|+1)+\log(|l_{t+1}|+1)+\sum_{i=1}^t \log(|k_il_i|+1),
\]
where for convenience we define $\log(0)=0$. One can think of $\ell(w)$ as the length (up to the linear equivalence)  of $w$ when encrypted via $0$'s and $1$'s. We refer to $\ell(w)$ simply as \emph{the length of the word $w$}.

Let $$\Psi: A_f \hookrightarrow G_{f} = \langle x, y \rangle $$
be an \emph{effective} embedding of the group $A_f$ into a two generated group $G_{f}$, where by effective embedding we mean that there is a \emph{polynomial time computable} map (with respect to the length of the input)
$$\phi: \nn \rightarrow \{x^{\pm 1}, y^{\pm 1}\}^*$$
such that for each $i \in \nn$, the image $\phi(i)$ is a word in $\{x^{\pm 1}, y^{\pm 1}\}^*$ that represents the element $\Psi(a_i)$ in $G_f$ and such that the length of $\phi(i)$ is bounded from above by $P(\log(i))$ for some fixed polynomial $P(n)$. For description of such embeddings, see, for example, \cite{darbinyan_group_2015, arman_new}.

Next, let us observe that the Discrete Log Problem in $G_f$ is at least as difficult as computation of $f: \nn \rightarrow \nn.$ 

Indeed, let us consider the equation $\Psi(a_i)^x = \Psi(b_i)$ for $i\in \nn$. Then, its solution is $x=f(i)$. Therefore, since $\Psi$ is an effective embedding, solving it is at least as difficult as computing $f(i)$.\\

In the next subsection we advance this insight by describing a concrete effective embedding of $A_f$ into a specific two generated group $G_f$.

\subsection{An effective embedding of $A_f$ into $G_f$.}

We start this section by fixing some standard group theoretical notations. Namely, for a group $G$ and $g, h \in G$, we denote the commutator of $g$ and $h$ by $[g, h]: = ghg^{-1}h^{-1}$. We also denote $g^h=hgh^{-1}$. For two subgroups $G_1, G_2 \leq G$, we denote by $[G_1, G_2]$ the subgroup of $G$ generated by commutator elements of the form $[g_1, g_2]$, where $g_1\in G_1$, $g_2 \in G_2$. We also denote $G':=[G, G]$.
 \begin{df}
The (unrestricted) wreath product $A \wr B$ is defined as a semidirect product $ A^B \rtimes B$, with the multiplication $(f_1b_1)(f_2 b_2) = f_1f_2^{b_1} b_1b_2$ for all $ f_1, f_2 \in A^B, ~b_1, b_2 \in B$, where $f_2^{b_1}: B \rightarrow A$ is defined as $f_2^{b_1}(b)=f_2(bb_1)$ for $b \in B$. In the definition, $A^B$ is the set of maps from $B$ to $A$ with a group structure defined by coordinate-wise multiplication.
\end{df}

Let 

\begin{align}
    \label{eq-presentation-of-A_f}
A_f: = \langle a_1, a_2, \ldots, b_1, b_2, 
    \ldots \mid [a_i, a_j], [a_i, b_j], [b_i, b_j], a_i^{{f(i)}}=b_i, i, j \in \nn \rangle.
\end{align}    
    be as in the previous section. In order to construct the group $G_f$, first, we will describe an embedding of the group $A_f$ into a subgroup $L$ of the unrestricted wreath product $A_f \wr \langle z \rangle $, where $\langle z \rangle$ is an infinite cyclic group generated by $z$. Define the functions $f_i: \langle z \rangle \rightarrow H $, $i=1, 2, \ldots$, as
\begin{align*}
   f_i(z^n)=  \left\{
                      \begin{array}{ll}
                       a_{\bar i}^n & \mbox{if $i$ is odd,}\\
                       b_{\bar i}^n & \mbox{if $i$ is even}
                     \end{array}
                    \right.
   \end{align*}
   where
\begin{align*}
  \bar i =  \left\{
                      \begin{array}{ll}
                       (i-1)/2 & \mbox{if $i$ is odd,}\\
                       i/2 & \mbox{if $i$ is even.}
                     \end{array}
                    \right.
   \end{align*}
   
Note that 
\begin{align}
\label{eq-evidence}
   [z, f_i](z^n)= ( z f_i z^{-1}f_i^{-1} )(z^n) =  ( z f_i z^{-1})(z^n) f_i^{-1}(z^n) = \left\{
                      \begin{array}{ll}
                        a_{\bar i}^{n+1} a_{\bar i}^{-n} = a_{\bar i} & \mbox{if $i$ is odd,}\\
                        b_{\bar i}^{n+1} b_{\bar i}^{-n} = b_{\bar i} & \mbox{if $i$ is even.}
                     \end{array}
                    \right.   
\end{align}

That is, $[z, f_i]$, regarded as a map from $\langle z \rangle$ to $A_f$, is the constant map equal to $a_{\bar i}$ or $b_{\bar i}$ depending on the parity of $i$.\\

Denote  $L=\langle z, f_1, f_2, \ldots \rangle < A_f \wr \langle z \rangle$. The next step is to embed the group $L$ into a two generator subgroup $G_f=\langle f, s \rangle$ of the full wreath product  $L \wr \langle s \rangle < A_f \wr\langle z \rangle \wr \langle s \rangle$, where $\langle s \rangle$ is another infinite cyclic group, generated by $s$. To that end, define the function $F: \langle s \rangle \rightarrow L $ as
\begin{align*}
   F(s^n)= \left\{
                      \begin{array}{ll}
                       z  & \mbox{if $n=0$ ,}\\
                       f_n  & \mbox{if $n>0$ ,}\\
                       1 & \mbox{otherwise.}
                     \end{array}
                    \right.
\end{align*}
Then, for $i\geq 0$, we have
\begin{align}
\label{eq-21}
   [F, F^{s^{i}}](s^n)=(FF^{s^{i}}F^{-1}F^{-s^{i}})(s^n)= \left\{
                      \begin{array}{ll}
                       [z, f_i]  & \mbox{if $n=0$ ,}\\
                       1 & \mbox{otherwise}
                     \end{array}
                    \right.
\end{align}
and for $i<0$, we have
\begin{align}
\label{eq-22}
   [F, F^{s^{i}}](s^n)=(FF^{s^{i}}F^{-1}F^{-s^{i}})(s^n)= \left\{
                      \begin{array}{ll}
                       [f_i, z]  & \mbox{if $n=i$ ,}\\
                       1 & \mbox{otherwise.}
                     \end{array}
                    \right.
\end{align}
Combining \eqref{eq-21} and \eqref{eq-22}, we get that for $i\geq 0$,
\begin{align}
\label{eq-positive-reduction-comm}
    [F, F^{s^i}]= [F, F^{s^{-i}}]^{-s^{i}}.
\end{align}
 Now, define $$G_f = \langle F, s \rangle <  A_f \wr\langle z \rangle \wr \langle s \rangle.$$

 Note that, by \eqref{eq-evidence} and \eqref{eq-21}, it directly follows that $A_f$ embeds into $G_f$. Namely, the map $\psi: a_i \mapsto  [F, F^{s^{2i+1}}], b_i \mapsto [F, F^{s^{2i}}]$ induces an embedding $\Psi$ of $A_f$ into $G_f$.

\begin{lem}
    \label{lem-Psi-is-effective}
    The embedding $\Psi: A_f \hookrightarrow G_f$ is effective. {Moreover, the lengths of the images of elements of $A_f$ are being preserved under $\Psi$ with respect to the length functions $\ell$.}
\end{lem}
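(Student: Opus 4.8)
The plan is to exhibit explicitly the map $\phi\colon \nn \to \{x^{\pm 1}, y^{\pm 1}\}^*$ (with $x = F$, $y = s$ in the generators of $G_f$) that realizes the embedding $\Psi$, and to verify the two required bounds: polynomial-time computability of $\phi$ in the length of the input, and the length bound $\ell(\phi(i)) \le P(\log i)$. Since the embedding $\psi$ is already defined on generators by $a_i \mapsto [F, F^{s^{2i+1}}]$ and $b_i \mapsto [F, F^{s^{2i}}]$, the natural definition is $\phi(i) := [F, F^{s^{2i+1}}]$ written out as a word in $F$ and $s$, i.e. $s^{2i+1} F s^{-(2i+1)} F^{-1} \cdot F \cdot s^{(2i+1)} F^{-1} s^{-(2i+1)}$ after expanding the conjugation and the commutator. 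Writing this down carefully, one sees that $\phi(i)$ is a word of the form $s^{m} F s^{-m} F^{-1} F F^{\,?}\dots$ whose syllable lengths are all $O(i)$, hence $\ell(\phi(i)) = O(\log i)$ by the definition of $\ell$ on $\{x^{\pm1},y^{\pm1}\}^*$ given just above the lemma; this already gives the ``moreover'' clause, since the corresponding element $a_i$ of $A_f$ has length $\ell(a_i) = \log|i| = \Theta(\log i)$ under the length function $\ell$ on $A_f$, so lengths are preserved up to linear (indeed constant) factors.

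Next I would spell out $\phi$ on an arbitrary element of $A_f$, not just a generator. Given a reduced word $w = a_{n_1}^{k_1}\cdots a_{n_t}^{k_t} b_{m_1}^{l_1}\cdots b_{m_s}^{l_s}$ representing an element of $A_f$, define $\phi(w)$ to be the concatenation $\prod_i [F, F^{s^{2n_i+1}}]^{k_i} \cdot \prod_j [F, F^{s^{2m_j}}]^{l_j}$, each factor written out as an explicit word in $F^{\pm1}, s^{\pm1}$. The point to check is that this is a homomorphism (so that it genuinely computes $\Psi(w)$): this is immediate because $\Psi$ is a homomorphism and $\phi$ is just the ``spelled-out'' version of $\psi$ extended multiplicatively. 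The computability claim is then routine: producing $\phi(w)$ amounts to, for each syllable of $w$, writing down a bounded-format word in $F, s$ whose $s$-exponents are $\pm(2n_i+1)$ or $\pm 2m_j$ and whose total $F$-syllable count is $O(|k_i|)$ or $O(|l_j|)$ — all of this is doable in time polynomial in $\ell(w) = \sum \log|n_ik_i| + \sum \log|m_jl_j|$, since each exponent and each index fits in $O(\ell(w))$ bits and we are performing only copying and arithmetic on such numbers. One subtlety worth noting: the word $\phi(w)$ itself has $\ell$-length only $O(\ell(w))$ because $\ell$ counts syllables logarithmically, but as a raw string of $F$'s and $s$'s it could be exponentially long if the $k_i$ are large; the fix is to keep the output in ``exponential notation'' $F^{k}, s^{m}$ rather than unary, which is exactly what the length function $\ell$ on $\{x^{\pm1},y^{\pm1}\}^*$ is designed to measure, so no contradiction arises.

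The only genuine content beyond bookkeeping is verifying that $\Psi$ really is injective and that the generators $\psi(a_i), \psi(b_i)$ satisfy exactly the defining relations of $A_f$ and no more — but this is asserted in the text preceding the lemma (``by \eqref{eq-evidence} and \eqref{eq-21}, it directly follows that $A_f$ embeds into $G_f$''), so I would simply cite that, or at most recall that the commutators $[F, F^{s^{2i+1}}]$ land in the base group $A_f^{\langle z\rangle \times \langle s\rangle}$ as constant-on-the-relevant-coordinate functions with values $a_i$ resp.\ $b_i$, whence the assignment is a well-defined homomorphism that is injective because distinct reduced words in $A_f$ map to functions with distinct support/value data in the iterated wreath product. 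The main obstacle, such as it is, is purely expository: organizing the explicit word $\phi(i)$ so that the syllable-length accounting is transparent, and making sure the ``polynomial time in the input length'' is measured against $\ell$ and not against the unary size of exponents. Concretely, I would (i) write $\phi$ on a generator, (ii) bound its $\ell$-length and note the length-preservation, (iii) extend $\phi$ multiplicatively to reduced words and observe it computes $\Psi$, (iv) check the polynomial-time bound on this computation, and (v) invoke the earlier discussion for injectivity of $\Psi$.
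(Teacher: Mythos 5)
Your handling of the generators coincides with the paper's: $\phi(i)$ is the commutator $[F,F^{s^{2i+1}}]$ (resp.\ $[F,F^{s^{2i}}]$) spelled out in $\{F^{\pm1},s^{\pm1}\}^*$, its $\ell$-length is $\Theta(\log i)$, and computing it is routine; this settles the first sentence of the lemma. The gap is in the ``moreover'' clause, i.e.\ length preservation for elements that are not generators. For $a_i^{k}$ you propose to output $[F,F^{s^{2i+1}}]^{k}$ ``each factor written out as an explicit word'', i.e.\ $|k|$ concatenated copies of an eight-syllable commutator. That word has $\Theta(|k|)$ syllables, so by the very definition of $\ell$ (a sum of logarithms taken \emph{over syllables}) its $\ell$-length is $\Theta(|k|\log i)$, not $O(\log(|k|i))=O(\ell_{A_f}(a_i^{k}))$; your claim that $\phi(w)$ has $\ell$-length $O(\ell(w))$ ``because $\ell$ counts syllables logarithmically'' is therefore false, and your proposed fix of keeping exponents in binary does not help, because the obstruction is the number of syllables, not the unary encoding of any single exponent (adjacent syllables belonging to distinct copies of the commutator do not merge). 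The same issue breaks the polynomial-time computability of your extended $\phi$: writing down $|k|$ copies takes time exponential in the bit-length of $k$, hence is not polynomial in $\ell(w)$.

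What is missing is exactly the one-line identity on which the paper's proof rests: $[F,F^{s^{m}}]^{k}=[F^{k},F^{s^{m}}]$, which holds because commutators of the conjugates $F^{s^{\alpha}}$ are central in the subgroup they generate (cf.\ Lemma \ref{lem-commutativity-of-commutators} and Corollary \ref{cor-F'-is-center}). With it, $\Psi(a_i^{k})$ is represented by the single eight-syllable word $[F^{k},F^{s^{2i+1}}]$, whose $\ell$-length is $\Theta(\log(|k|i))=\Theta(\ell_{A_f}(a_i^{k}))$ and which is computable in time polynomial in $\log|k|+\log i$; this collapsing of powers of commutators is the whole content of the paper's argument, and without it (or an equivalent device) your length-preservation and computability claims for non-generator elements do not go through.
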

\begin{proof}
    Indeed, for each $i, n \in \nn$, $\Psi: a_i^n \mapsto [F, F^{s^{2i}}]$ and $\Psi: b_i^n \mapsto [F, F^{s^{2i+1}}]$. Now, note that by a straightforwrad checking we have $[F, F^{s^{i}}]^n =  [F^n, F^{s^{i}}]$. Finally, for $[F^n, F^{s^{i}}]\in \{F^{\pm 1}, s^{\pm 1}\}^*$, $\ell_{A_f}(a_i^n)= \ell_{A_f}(b_i^n) = \log(ni) < \ell([F^n, F^{s^{i}}]) < 8 \log(ni), $ which implies the statement of the lemma.
\end{proof}
 After defining the group $G_f$, our next task is to show that the discrete log problem in $G_f$ is polynomially equivalent to the computation of $f: \nn \rightarrow \nn.$

 Note that by iteratively applying rewritings of the form $s^k F^l \rightarrow (F^{s^k})^l s^k$, every word formed by the alphabet $\{F^{\pm 1}, s^{\pm 1} \}^*$, in polynomial time (in fact, in linear time) can be transformed into a word of the form 
 \begin{align}
 \label{eq rewriting 1}
    w=(F^{s^{\alpha_1}})^{\beta_1}(F^{s^{\alpha_2}})^{\beta_2}\ldots (F^{s^{\alpha_n}})^{\beta_n} s^{\delta},
\end{align}
where  $\delta$, $\alpha_i$, $\beta_i$, $i=1, \ldots n$, are some integers. This means that every element of $G_f$ can be decomposed as in \eqref{eq rewriting 1}.

\begin{lem}
\label{lem-commutativity-of-commutators}
    For $\alpha, \beta, \gamma, k, l, m \in \zz$, we have $[[(F^{s^{\alpha}})^k, (F^{s^{\beta}})^l], (F^{s^{\gamma}})^m ] =1. $
\end{lem}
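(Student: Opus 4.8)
The plan is to unwind the wreath-product structure explicitly. Every element of $G_f = \langle F, s\rangle$ lives in $A_f \wr \langle z\rangle \wr \langle s\rangle$, so it is, at bottom, a pair: a function $\langle s\rangle \to L$ together with a translation by a power of $s$. I would first observe that the elements $(F^{s^\alpha})^k$, $(F^{s^\beta})^l$, $(F^{s^\gamma})^m$ all have trivial $\langle s\rangle$-translation part (they are conjugates of $F$ by powers of $s$ times $F^{-1}$-type products — more precisely, $F^{s^\alpha}$ has translation part equal to that of $F$, which is trivial, since $F$ itself lies in the base group $L^{\langle s\rangle}$). Hence each of these three elements is just a function $g_\bullet : \langle s\rangle \to L$, and the commutator of two such functions is computed coordinatewise: $[g_1,g_2](s^n) = [g_1(s^n), g_2(s^n)]$ in $L$.

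Next I would identify the supports. From the computation \eqref{eq-21}–\eqref{eq-22} in the excerpt, the inner commutator $[(F^{s^\alpha})^k,(F^{s^\beta})^l]$, evaluated at $s^n$, is supported on at most one value of $n$ (roughly $n=0$ or $n = \pm|\alpha-\beta|$ after the rewriting \eqref{eq-positive-reduction-comm}), and at that coordinate its value lies in $[z^{\pm}, f_j^{\pm}]^{\mathbb Z}$-type subgroup — in any case, inside the base group of the \emph{next} wreath layer $L = \langle z, f_1,\dots\rangle < A_f \wr \langle z\rangle$, and in fact inside $A_f$ itself, by \eqref{eq-evidence}. So the inner commutator, as an element of $L^{\langle s\rangle}$, takes values in (a copy of) the abelian group $A_f$. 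Therefore, when I form the outer commutator with $(F^{s^\gamma})^m$ and evaluate at any $s^n$, I get $[\,c(s^n),\, h(s^n)\,]$ where $c(s^n) \in A_f$ (abelian!) and $h(s^n) \in L$; since $A_f$ is abelian and normal in $L$ is not quite what I need — rather, the key point is that $c(s^n)$ lies in the normal abelian subgroup $A_f^{\langle z\rangle}$-constant part, and conjugating an element of the base group $A_f \le A_f\wr\langle z\rangle$ by $f_j = $ (a function into $A_f$) keeps it inside $A_f$ and, because $A_f$ is abelian, acts trivially. Hence $[c(s^n), h(s^n)] = 1$ for every $n$, so the outer commutator is trivial.

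The cleanest way to run the last step is: the inner commutator $[(F^{s^\alpha})^k,(F^{s^\beta})^l]$, read one layer down, is a function $\langle s\rangle \to A_f$ that is \emph{constant} on $\langle z\rangle$ at each coordinate of $\langle s\rangle$ (by \eqref{eq-evidence}, $[z,f_i]$ is the constant map $a_{\bar i}$), i.e. it lands in the diagonal/constant copy of $A_f$. The element $(F^{s^\gamma})^m$ read one layer down is a function into $L = \langle z, f_1,\ldots\rangle$. Now $A_f$ sits inside $A_f \wr \langle z\rangle$ as the base-group fiber over the identity of $\langle z\rangle$, which is normalized by every $f_i$ (since $f_i^{-1} a_{\bar j} f_i$ is computed in the abelian base group $A_f^{\langle z\rangle}$, and the shift by $z$ doesn't enter because $f_i$ has no $\langle z\rangle$-translation part) — and because $A_f$ is abelian, this conjugation is trivial, so $z$ and $f_i$ both centralize the constant copy of $A_f$. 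Thus $(F^{s^\gamma})^m$ centralizes the inner commutator coordinatewise, giving $[[\,\cdot\,,\cdot\,],(F^{s^\gamma})^m] = 1$.

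I expect the main obstacle to be bookkeeping the three nested wreath layers $A_f \wr \langle z\rangle \wr \langle s\rangle$ correctly — in particular, making precise that $F^{s^\alpha}$ has trivial $s$-translation (so the outer commutator really is evaluated coordinatewise with no cross terms from shifting) and that the inner commutator's values genuinely land in the \emph{abelian} constant copy of $A_f$ rather than in a larger nonabelian piece of $L$. Once those two structural facts are pinned down, triviality is immediate from commutativity of $A_f$. A possible slicker alternative, worth checking, is to note that $\psi(A_f) = \langle [F,F^{s^j}] : j\rangle$ generates an abelian subgroup of $G_f$ (that is the content of the embedding $\Psi$), and that the inner commutator $[(F^{s^\alpha})^k,(F^{s^\beta})^l]$ lies in this abelian subgroup $\Psi(A_f)$, while $(F^{s^\gamma})^m$ normalizes it and acts by conjugation — but verifying that the conjugation action is trivial still reduces to the same coordinatewise computation, so I would present the direct wreath-product argument as the primary proof.
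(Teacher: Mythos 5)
Your proposal is correct and follows essentially the same route as the paper's own proof: evaluate everything coordinatewise in $L\wr\langle s\rangle$, observe via \eqref{eq-evidence}, \eqref{eq-21}, \eqref{eq-22} that the inner commutator's values at each $s$-coordinate are \emph{constant} maps $\langle z\rangle\to A_f$, and conclude that these commute with $z$ (shift-invariance) and with every $f_i$ (abelian base), hence with all of $L$, so the outer commutator vanishes. One small caution: the relevant copy of $A_f$ is the constant (diagonal) one, as you first say, not ``the base-group fiber over the identity of $\langle z\rangle$'' — the latter is not centralized by $z$ — but since your computation of the values lands in the constant copy, the argument is unaffected.
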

\begin{proof}
    From the above discussion, for any $n \in \zz$, $[(F^{s^{\alpha}})^k, (F^{s^{\beta}})^l] (s^n)$, as a map from $\langle z \rangle $ to $G_f$ is a constant map, taking values from the commutative group $A_f$. Therefore, $[(F^{s^{\alpha}})^k, (F^{s^{\beta}})^l] (s^n)$ commutes with $z$ in $A_f \wr \langle z \rangle$. This implies that $[[(F^{s^{\alpha}})^k, (F^{s^{\beta}})^l], (F^{s^{\gamma}})^m ] =1.$
\end{proof}
Let us denote $\calF = \langle F^{s^{\alpha}} \mid \alpha \in \zz \rangle < G_f$ and denote $\calF' = [\calF, \calF]<\calF$. As an immediate consequence of Lemma \ref{lem-commutativity-of-commutators}, we get
$$\calF' = \langle ([F, F^{s^{\beta}}]^{s^{\gamma}})^{l} \mid \beta, \gamma, l \in \zz \rangle.$$

As yet another immediate corollary from Lemma \ref{lem-commutativity-of-commutators}, we get the following.
\begin{cor}
\label{cor-F'-is-center}
    $\calF' = Z(\calF)$, that is $\calF'$ coincides with the center of $\calF$.
\end{cor}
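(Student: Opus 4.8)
The plan is to establish the two inclusions of $\calF'=Z(\calF)$ separately, the inclusion $\calF'\subseteq Z(\calF)$ being essentially a restatement of Lemma~\ref{lem-commutativity-of-commutators}, and the reverse one requiring a short computation inside the iterated wreath product $A_f\wr\langle z\rangle\wr\langle s\rangle$.

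For $\calF'\subseteq Z(\calF)$: by the generating set $\calF'=\langle([F,F^{s^\beta}]^{s^\gamma})^{l}\rangle$ displayed just above, it suffices to check that each generator $([F,F^{s^\beta}]^{s^\gamma})^{l}$ is central in $\calF$. Using $[F,F^{s^i}]^n=[F^n,F^{s^i}]$ (from the proof of Lemma~\ref{lem-Psi-is-effective}) and rewriting the conjugate, this element equals $[(F^{s^\gamma})^{l},F^{s^{\beta+\gamma}}]$, which by Lemma~\ref{lem-commutativity-of-commutators} commutes with every $(F^{s^\delta})^{m}$, in particular with every generator $F^{s^\delta}$ of $\calF$. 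Hence every generator of $\calF'$ is central, so $\calF'\subseteq Z(\calF)$; in particular $\calF$ is nilpotent of class at most $2$.

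For $Z(\calF)\subseteq\calF'$: let $g\in Z(\calF)$. Since $\calF'$ is central and $\calF/\calF'$ is abelian, generated by the images of the $F^{s^\alpha}$, we may write $g\equiv\prod_{i=1}^{r}(F^{s^{\alpha_i}})^{\beta_i}\pmod{\calF'}$ with $\alpha_1<\cdots<\alpha_r$; if all $\beta_i$ vanish then $g\in\calF'$ and we are done, so suppose $\beta_{i_0}\ne 0$ for some $i_0$. Because all commutators of $\calF$ lie in $\calF'$ and are central, the class-$2$ identities give $1=[g,F^{s^\gamma}]=\prod_{i=1}^{r}[F^{s^{\alpha_i}},F^{s^\gamma}]^{\beta_i}$ for every $\gamma\in\zz$. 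The key point is to read off $[F^{s^{\alpha_i}},F^{s^\gamma}]$ concretely when $\gamma>\alpha_r$: writing it as $[F,F^{s^{\gamma-\alpha_i}}]^{s^{\alpha_i}}$ and applying \eqref{eq-21} and \eqref{eq-evidence}, it is the function on $\langle s\rangle$ supported at the single point $s^{-\alpha_i}$, where it equals the constant map $\langle z\rangle\to A_f$ with value $a_{\overline{\gamma-\alpha_i}}$ if $\gamma-\alpha_i$ is odd and $b_{\overline{\gamma-\alpha_i}}$ if it is even. Since the $\alpha_i$ are pairwise distinct, these supports are disjoint, so the displayed product can vanish only if $[z,f_{\gamma-\alpha_i}]^{\beta_i}=1$ in $A_f$ for every $i$. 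Now choose $\gamma>\alpha_r$ with $\gamma\not\equiv\alpha_{i_0}\pmod 2$; then $\gamma-\alpha_{i_0}$ is a positive odd integer and the condition at the coordinate $s^{-\alpha_{i_0}}$ reads $a_{\overline{\gamma-\alpha_{i_0}}}^{\beta_{i_0}}=1$. But the assignment $a_j\mapsto e_j$, $b_j\mapsto f(j)e_j$ kills the relators $a_j^{f(j)}b_j^{-1}$ and exhibits $A_f\cong\bigoplus_j\zz$, so $a_{\overline{\gamma-\alpha_{i_0}}}$ has infinite order and $\beta_{i_0}=0$, a contradiction. Hence all $\beta_i=0$ and $g\in\calF'$.

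I expect the only genuine obstacle to be the bookkeeping in the third paragraph: identifying $[F^{s^{\alpha_i}},F^{s^\gamma}]$ for $\gamma$ large, from \eqref{eq-21}--\eqref{eq-22} and the definition of $F$, as a function on $\langle s\rangle$ supported at a single point whose location $s^{-\alpha_i}$ depends only on $\alpha_i$ (so that distinct $\alpha_i$ occupy distinct coordinates and cannot cancel each other). The remaining ingredients — the class-$2$ commutator calculus and the torsion-freeness of $A_f$ on the generators $a_j$ — are routine.
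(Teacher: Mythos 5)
Your proof is correct. Note, however, that the paper offers no argument at all for this statement: it is recorded as ``yet another immediate corollary'' of Lemma \ref{lem-commutativity-of-commutators}, which by itself only yields the inclusion $\calF' \subseteq Z(\calF)$ (the part you establish in your second paragraph, matching the paper's implicit reasoning via the displayed generating set of $\calF'$ and the identity $[F,F^{s^{\beta}}]^{l}=[F^{l},F^{s^{\beta}}]$). The reverse inclusion $Z(\calF)\subseteq\calF'$ is not literally immediate, and your third paragraph is the genuine added content: reducing centrality of $g$ to the vanishing of $\prod_{i}[F^{s^{\alpha_i}},F^{s^{\gamma}}]^{\beta_i}$ via class-$2$ commutator identities, identifying $[F^{s^{\alpha_i}},F^{s^{\gamma}}]=[F,F^{s^{\gamma-\alpha_i}}]^{s^{\alpha_i}}$ as a function on $\langle s\rangle$ supported only at $s^{-\alpha_i}$ with value a power of $a_{\overline{\gamma-\alpha_i}}$ or $b_{\overline{\gamma-\alpha_i}}$, and using that $A_f$ is free abelian on the $a_j$ (so these have infinite order) to force every exponent to vanish. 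I checked this against the paper's conventions ($g^{h}=hgh^{-1}$, $f^{b}(x)=f(xb)$) together with \eqref{eq-evidence} and \eqref{eq-21}, and the computation is right; the parity choice of $\gamma$ is a nice touch that lets you work only with the generators $a_j$. In effect you show that $\calF/\calF'$ is free abelian on the images of the $F^{s^{\alpha}}$, which is consistent with, and independent of, the uniqueness assertion of Lemma \ref{lem-on-reduced-form} proved later in the paper; arguing directly as you do avoids any circularity. It is also worth observing that the paper subsequently uses only the inclusion $\calF'\subseteq Z(\calF)$ (in the proof of Lemma \ref{lem-on-reduced-form}), which is presumably why the authors felt entitled to call the equality immediate; your argument is what actually justifies the equality as stated.
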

\begin{lem}
\label{lem-on-reduced-form}
    Every word $w \in \{F^{\pm 1}, s^{\pm 1} \}^*$ can be transformed in linear time into a word that represents the same element in $G_f$ as $w$ and such that it is of the form
    \begin{align}
        \label{eq-reduced-form}
    \prod_{i=1}^{t_1} (F^{s^{\alpha_i}})^{k_i} \prod_{i=1}^{t_2} ([F, F^{s^{\beta_i}}]^{s^{\gamma_i}})^{l_i} s^y,
    \end{align} 
    where $\alpha_1>\alpha_2 > \ldots > \alpha_{t_1}$, $(\beta_1, \gamma_1)>_{lex}(\beta_2, \gamma_2)>_{lex}\ldots>_{lex}(\beta_{t_2}, \gamma_{t_2})$, $\beta_i>0$, $i=1, \ldots, t_2$, $k_i\neq 0$ if $t_1\neq 0$ and $l_i\neq 0$ if $t_2\neq 0$.
    Moreover, the parameters $\alpha_1, \ldots, \alpha_{t_1}$, $k_1, \ldots, k_{t_1}$, and $y$ are uniquely defined for any given element of $G_f$. 
\end{lem}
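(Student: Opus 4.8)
The statement splits into an existence claim (every word rewrites, in linear time, to the shape \eqref{eq-reduced-form}) and a uniqueness claim (the data $y,\alpha_1,\dots,\alpha_{t_1},k_1,\dots,k_{t_1}$ is an invariant of the group element). I would handle them separately, using three facts already in place: $\calF\trianglelefteq G_f$ (it is generated by the $\langle s\rangle$-conjugates of $F$, which $s$ permutes, and $F\in\calF$); $\calF$ is nilpotent of class at most $2$ with $\calF'$ central in it and equal to its center (Lemma~\ref{lem-commutativity-of-commutators} and Corollary~\ref{cor-F'-is-center}); and $\calF$ is contained in the base group $L^{\langle s\rangle}$ of $L\wr\langle s\rangle$, since each $F^{s^\alpha}$ is.

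\emph{Existence.} I would begin from the decomposition \eqref{eq rewriting 1}, produced in linear time by the one left-to-right pass of the remark before the lemma; this isolates the suffix $s^{y}$ with $y$ the sum of the exponents of $s$ in $w$, the prefix being a product of powers $(F^{s^\alpha})^k$, hence an element of $\calF$. Next I would reorder this prefix into strictly decreasing order of $\alpha$, merging equal generators and dropping zero powers. Each interchange of two factors $(F^{s^a})$ and $(F^{s^b})$ releases a factor $[F^{s^a},F^{s^b}]^{\pm1}$, which is central in $\calF$ by Lemma~\ref{lem-commutativity-of-commutators} and so can be transported to the far right; using $[F^{s^a},F^{s^b}]=[F,F^{s^{b-a}}]^{s^a}$ together with \eqref{eq-positive-reduction-comm} to force the inner exponent to be positive, each released factor becomes $\big([F,F^{s^{\beta}}]^{s^{\gamma}}\big)^{\pm1}$ with $\beta>0$. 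Since these commute with one another (again Lemma~\ref{lem-commutativity-of-commutators}), I would finally sort them by the lexicographic order on $(\beta,\gamma)$, merge equal ones, and delete zero powers. The result is a word of the form \eqref{eq-reduced-form} satisfying all the stated sign and ordering conditions. For the complexity one uses that after \eqref{eq rewriting 1} there are at most $\ell(w)$ factors, with exponents of absolute value at most $\ell(w)$, so that the sortings run by bucket sort; arranging the collection so that the amount of released commutator data stays linear — for instance by reading off the $F^{s^\alpha}$-part in one pass and computing the central correction coordinatewise, reducing each coordinate inside $A_f$ via Lemma~\ref{lem-transformation-into-reduced-word} — is the one point needing care, and is the main obstacle in the argument.

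\emph{Uniqueness.} For $y$: because $\calF\trianglelefteq G_f$, $F\in\calF$, and the projection $L\wr\langle s\rangle\to\langle s\rangle$ onto the top group kills $\calF$ and carries $s$ to $s$, the quotient $G_f/\calF$ is infinite cyclic generated by the image of $s$; so $y$ in \eqref{eq-reduced-form} must equal the image of the given element under $G_f\to G_f/\calF\cong\zz$. For the $\alpha_i$ and $k_i$ I would construct a homomorphism $\psi\colon\calF\to\prod_{n\in\zz}\zz$ sending $F^{s^\alpha}$ to the indicator $e_{c(\alpha)}$ of a single integer, where $\alpha\mapsto c(\alpha)$ is a bijection of $\zz$. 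Namely, let $\tau\colon L\to\langle z\rangle\cong\zz$ be the projection arising from $L\le A_f\wr\langle z\rangle=A_f^{\langle z\rangle}\rtimes\langle z\rangle$, so $\tau(z)=1$ and $\tau(f_m)=0$ for every $m$; applying $\tau$ coordinatewise on the base $L^{\langle s\rangle}$ and restricting to $\calF$ gives $\psi$, and the definition of $F$ shows that $\psi(F)$ is the indicator of $\{0\}$, hence $\psi(F^{s^\alpha})$ is the indicator of an integer depending bijectively on $\alpha$. As the codomain is abelian, $\psi$ kills $\calF'$, so every commutator factor in \eqref{eq-reduced-form} lies in $\ker\psi$; therefore, if $g$ is written as in \eqref{eq-reduced-form}, then $\psi(g\,s^{-y})=\sum_i k_i\,e_{c(\alpha_i)}$. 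As the $\alpha_i$ are distinct and the $k_i$ nonzero, this function determines the set $\{(\alpha_i,k_i)\}$, and the ordering $\alpha_1>\cdots>\alpha_{t_1}$ then determines the sequence; since $y$ and $g$ determine $g\,s^{-y}$, the whole data depends only on $g$, which is the uniqueness claim.

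In summary, the algebraic content is all either immediate or already recorded in the cited lemmas, and the only genuinely delicate step is keeping the collection-and-sorting procedure within linear time, that is, preventing the number of commutator corrections released while reordering the $F^{s^\alpha}$-factors from growing quadratically. Fixing the wreath-product shift convention consistently — which pins down the bijection $c$ and a few signs — is a further routine matter.
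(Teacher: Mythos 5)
Your proposal is correct and follows essentially the same route as the paper: existence by collecting the $(F^{s^{\alpha}})$-syllables of the normal form \eqref{eq rewriting 1} via $ab\to[a,b]ba$, using Lemma \ref{lem-commutativity-of-commutators}, \eqref{eq-positive-reduction-comm} and Corollary \ref{cor-F'-is-center}; and uniqueness by projecting to $\langle s\rangle$ for $y$ and reading off, coordinatewise on the base $L^{\langle s\rangle}$, the $z$-exponent coming from the splitting $L=\langle f_1,f_2,\ldots\rangle^{\langle z\rangle}\rtimes\langle z\rangle$ — your homomorphism $\psi$ is precisely the paper's evaluation of $\tilde F$ at the coordinates $s^{-\alpha_i}$. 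The only real difference is that you explicitly flag the bookkeeping needed to keep the collection of released commutator factors within the stated time bound, a point the paper's proof passes over without detail.
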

\begin{proof}
    Indeed, one can start with a presentation of the form \eqref{eq rewriting 1}, then iteratively apply rewritings of the form $ab \rightarrow [a, b]ba$ in order to collect the terms $(F^{s^{\alpha_i}})^{\beta_i}$, $(F^{s^{\alpha_j}})^{\beta_j}$ for $\alpha_i = \alpha_j$ together. Because of Lemma \ref{lem-commutativity-of-commutators}, the commutator terms that will occur in the process of this rewriting are of the form $([F, F^{s^{\beta}}]^{s^{\gamma}})^{l}$. Now, because of the equation \eqref{eq-positive-reduction-comm}, without loss of generality, we can always assume that $\beta>0$. Also , as $\calF'=Z(\calF)$ due to Corollary \ref{cor-F'-is-center}, we can combine the terms of the form $([F, F^{s^{\beta}}]^{s^{\gamma}})^{l}$ in a way to obtain the decomposition \eqref{eq-reduced-form}. 

    Now, let us show the uniqueness of the parameters  $\alpha_1, \ldots, \alpha_{t_1}$, $k_1, \ldots, k_{t_1}$ and $y$ in the decomposition \eqref{eq-reduced-form}. The uniqueness of $y$ follows immediately from the construction (basic properties of semi-direct products) and the fact that $\langle s \rangle$ is an infinite cyclic group. To show the uniqueness of the other parameters, let us denote $\tilde F:=\prod_{i=1}^{t_1} (F^{s^{\alpha_i}})^{k_i} \prod_{i=1}^{t_2} ([F, F^{s^{\beta_i}}]^{s^{\gamma_i}})^{l_i}$ and regard it as a map from $\langle s \rangle$ to $L=\langle z, f_1, f_2, \ldots \rangle< A_f \wr \langle z \rangle.$ Then, note that, for $i=1, \ldots, t$, $\tilde F(s^{-\alpha_i})$ is of the form $\tilde f z^{k_i}$, where $\tilde f  \in \langle f_1, f_2, \ldots \rangle$; whereas, if $\alpha \in \zz\setminus \{\alpha_1, \ldots, \alpha_t \}$, then $\tilde F(s^{-\alpha}) \in \langle f_1, f_2, \ldots \rangle$. Also, since $L$ splits as $L= \langle  f_1, f_2, \ldots \rangle^{\langle z \rangle} \rtimes \langle z \rangle$, we get that in any decomposition of an element of $G_f$ in the form \eqref{eq-reduced-form}, the parameters $\alpha_1, \ldots, \alpha_{t_1}$, $k_1, \ldots, k_{t_1}$ are uniquely defined as well. 
   
\end{proof}

\begin{cor}
\label{cor-WP-in-G_f}
\label{cor-complexity-WP-in-Af}
    The word problem in $G_f$ in linear time reduces to the word problem in $A_f$ with respect to the presentation \eqref{eq-presentation-of-A_f}. In particular, the word problem in $G_f$, up to a polynomial factor, can be computed in time $T_{f^{-1}}(n)$, where $n$ is the length of the input with respect to $\ell$.
\end{cor}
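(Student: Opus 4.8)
The plan is to push an arbitrary $w\in\{F^{\pm1},s^{\pm1}\}^{*}$ through the normal-form procedure of Lemma~\ref{lem-on-reduced-form} and then read triviality off the resulting data; the only non-obvious ingredient will be a bounded number of word-problem queries for $A_f$.

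First I would apply Lemma~\ref{lem-on-reduced-form} to rewrite $w$, in linear time, into the form \eqref{eq-reduced-form}, say
\[
\prod_{i=1}^{t_1}(F^{s^{\alpha_i}})^{k_i}\;\prod_{i=1}^{t_2}\bigl([F,F^{s^{\beta_i}}]^{s^{\gamma_i}}\bigr)^{l_i}\;s^{y},\qquad\beta_i>0.
\]
By the uniqueness clause of that lemma, $w=1$ in $G_f$ forces $y=0$ and $t_1=0$; conversely, if $y=0$ and $t_1=0$ then $w$ represents the central element $c:=\prod_{i=1}^{t_2}\bigl([F,F^{s^{\beta_i}}]^{s^{\gamma_i}}\bigr)^{l_i}\in\calF'$, so the task reduces to deciding whether $c=1$ in $G_f\le A_f\wr\langle z\rangle\wr\langle s\rangle$.

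Next I would analyse $c$ as a map $\langle s\rangle\to L$. By \eqref{eq-21} and \eqref{eq-evidence}, for $\beta>0$ the element $[F,F^{s^{\beta}}]$ is supported at $s^{0}$, where its value is the constant map $\langle z\rangle\to A_f$ equal to the generator of $A_f$ associated to $\beta$ under $\psi$ (that is $a_{\bar\beta}$ when $\beta$ is odd and $b_{\bar\beta}$ when $\beta$ is even); conjugation by $s^{\gamma}$ leaves this value unchanged and merely shifts its support to $s^{-\gamma}$. Hence, after grouping the factors of $c$ according to the value $\gamma$ of $\gamma_i$, the subproduct attached to a fixed $\gamma$ is supported exactly at $s^{-\gamma}$, and subproducts attached to distinct values of $\gamma$ have pairwise disjoint support. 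Therefore $c=1$ if and only if each of these subproducts vanishes, i.e.\ (conjugating back by $s^{-\gamma}$, which is injective) if and only if $\prod_{i:\gamma_i=\gamma}[F,F^{s^{\beta_i}}]^{l_i}=1$ for every $\gamma$. Using $[F,F^{s^{\beta}}]^{l}=[F^{l},F^{s^{\beta}}]$ together with the definition of $\psi$, this last product equals $\Psi(u_\gamma)$, where $u_\gamma$ is the word in $A_f$ obtained from the relevant factors by replacing each $[F,F^{s^{\beta_i}}]$ by its $\psi$-preimage; since $\Psi$ is an embedding, it is trivial iff $u_\gamma=1$ in $A_f$. The words $u_\gamma$ can be produced from $w$ in linear time, there are $O(\ell(w))$ of them, and the sum of their lengths (with respect to the length function of $A_f$) is $O(\ell(w))$: this is precisely the asserted linear-time reduction of the word problem of $G_f$ to (a bounded conjunction of instances of) the word problem of $A_f$ relative to the presentation \eqref{eq-presentation-of-A_f}.

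Finally, for the complexity estimate I would invoke Corollary~\ref{cor-wp-in-A_f}: a word $u_\gamma$ of length $m$ is decided in time $\calO(m^{2}T_{f^{-1}}(m))$, and summing over all $\gamma$ — using that $T_{f^{-1}}$ is non-decreasing and that the lengths $m_\gamma$ sum to $O(n)$ with $n=\ell(w)$ — yields total running time $\calO(n^{2}T_{f^{-1}}(n))$ for the word problem of $G_f$, i.e.\ time $T_{f^{-1}}(n)$ up to a polynomial factor. The one step that really demands care — the main obstacle — is the coordinate bookkeeping inside the iterated wreath product $A_f\wr\langle z\rangle\wr\langle s\rangle$: one must check that conjugation by $s^{\gamma}$ acts on $\calF'$ just as a shift of support, so that the contributions of distinct $\gamma$'s are genuinely independent (this is what makes the disjoint-support argument go through), reconcile this with the normalisation $\beta_i>0$ coming from \eqref{eq-positive-reduction-comm} and with the index range on which $\psi$ was defined (in particular the boundary factor $[F,F^{s}]$), and confirm that the words $u_\gamma$ obtained after grouping are honest instances of the word problem for the presentation \eqref{eq-presentation-of-A_f}. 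Everything else is immediate from Lemma~\ref{lem-on-reduced-form}, Corollary~\ref{cor-F'-is-center}, and the injectivity of $\Psi$.
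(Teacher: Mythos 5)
Your proposal is correct and follows essentially the same route as the paper's proof: reduce to canonical form via Lemma~\ref{lem-on-reduced-form}, use the uniqueness of the parameters $\alpha_i, k_i, y$ to reduce to an element of $\calF'$, regroup the factors $([F,F^{s^{\beta_i}}]^{s^{\gamma_i}})^{l_i}$ by the value of $\gamma_i$, and translate each subproduct into a word-problem instance in $A_f$, finishing with Corollary~\ref{cor-wp-in-A_f} for the $T_{f^{-1}}$ bound. Your explicit disjoint-support bookkeeping in $A_f\wr\langle z\rangle\wr\langle s\rangle$ and the parity-aware correspondence $[F,F^{s^{\beta}}]\leftrightarrow a_{\bar\beta}$ or $b_{\bar\beta}$ simply spell out details the paper leaves implicit (the paper's shorthand $a_{\beta_i}^{l_i}$ glosses over the $\bar\beta$ reindexing), so no further changes are needed.
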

\begin{proof}
    Let $g \in G_f$ be given as a word in $\{F^{\pm 1}, s^{\pm 1}\}^*$.  By Lemma \ref{lem-on-reduced-form}, in linear time, a canonical decomposition of the form \eqref{eq-reduced-form} can be found for $g$. Again, by Lemma \ref{lem-on-reduced-form}, if $g=1$, then in its canonical decomposition the parameters $\alpha_1, \ldots, \alpha_{t_1}$, $k_1, \ldots, k_{t_1}$ and $y$ would all be equal to $0$. Therefore, assuming that $\alpha_1, \ldots, \alpha_{t_1}$, $k_1, \ldots, k_{t_1}$ and $y$ are all equal to $0$, checking whether or not $g=1$ would be equivalent to checking whether or not 
    \begin{align}
        \label{eq-reduction-to-A_f}
    \prod_{i=1}^{t_2} ([F, F^{s^{\beta_i}}]^{s^{\gamma_i}})^{l_i}
    \end{align}
    is equal to $1$ in $G_f$, where the notation comes from \eqref{eq-reduced-form}. To check this, one can regroup the terms $[F, F^{s^{\beta_i}}]^{s^{\gamma_i}}$ according to the values $\gamma_i$, that is, for each $\gamma \in \{\gamma_1, \ldots, \gamma_{t_2} \}$, one can consider the subproduct of terms $[F, F^{s^{\beta_i}}]^{s^{\gamma_i}}$ for which $\gamma_i=\gamma$. Notice that, by \eqref{eq-21} and \eqref{eq-evidence}, checking whether or not \eqref{eq-reduction-to-A_f} represents $1$ in $G_f$ would be the same as to check whether or not the element in $A_f$ that corresponds to those subproducts via the correspondence $[F, F^{s^{\beta_i}}]^{s^{\gamma_i}} \leftrightarrow a_{\beta_i}^{l_i} \in A_f$ (for each  $\gamma \in \{\gamma_1, \ldots, \gamma_{t_2} \}$) represent the trivial element of $A_f$. Thus the word problem in $G_f$ in linear time reduces to the word problem in $A_f$.  
\end{proof}

\subsection{The Discrete Log Problem in $G_f$}
In this subsection we show that the discrete log problem in $G_f$ is polynomially equivalent to the computation of $f$.

\begin{df}
    For any $g \in G_{f}$, a decomposition of $g$ in the form \eqref{eq-reduced-form}, where the parameters in the form satisfy the conditions from Lemma \ref{lem-on-reduced-form}, we call a \textbf{canonical decomposition} or \textbf{canonical form} of $g$.
\end{df}

Let $g, h \in G_f$. Let us consider the equation
\begin{align}
    \label{eq-log-eq}
    g^x=h, \text{~where $x \in \zz$ is the unknown.}
\end{align}
\
Notation: For $g \in G_f$ that has a canonical decomposition \eqref{eq-reduced-form}, we denote $\pi_s(g): = y$. 

Note that for $g_1, g_2 \in G_f$, $\pi_s(g_1g_2) = \pi_s(g_1)+\pi_s(g_2).$ Therefore, if in \eqref{eq-log-eq} $\pi_s(h)\neq 0$, then it would imply that $x\pi_s(g) = \pi_s(h)$, that is $x=\pi_s(h)/\pi_s(g).$ (In particular, it would mean that $\pi_s(h)$ is a multiple of $\pi_s(g)$.) Therefore, \eqref{eq-log-eq} would have a solution (namely, $x=\pi_s(h)/\pi_s(g)$) if and only if
$g^{\pi_s(h)/\pi_s(g)}h^{-1}=_{G_f} 1.$ Since, by Corollary \ref{cor-complexity-WP-in-Af}, the word problem in $G_f$ is polynomially equivalent to the computation of $f$, we conclude that if $\pi_s(g)\neq 0$ or $\pi_s(h)\neq 0$, then solving \eqref{eq-log-eq} is at most as difficult as computing $f$. Thus we are left with the case when $\pi_s(g)=\pi_s(h)=0$.
~\\
\noindent
\begin{notation}
For $g$ with canonical decomposition as in \eqref{eq-reduced-form}, we denote $\deg_{\alpha_i}(g):=k_i$ for $i=1, 2, \ldots, t_1$. For $\alpha \in \nn \setminus \{\alpha_1, \ldots, \alpha_{t_1}\}$, we define $ \deg_{\alpha}(g)=0$.
\end{notation}
Note that
$$\text{if $\pi_s(g)=0$, then $\deg_{\alpha}(gh)=\deg_{\alpha}(g)+deg_{\alpha}(h)$} \text{~for all~ $\alpha \in \nn$}.$$
Therefore, assuming $\pi_s(g)=0$, we get $x\deg_{\alpha}(g)=\deg_{\alpha}(h)$ for $\alpha \in \nn$. This means that either $\deg_{\alpha_i}(g)=\deg_{\alpha_i}(h)=0$ for $i=1, 2, \ldots, t_1$, equivalently, $g, h \in \calF'$, or there is only one possibility for $x$, namely $x = \deg_{\alpha_i}(h)/\deg_{\alpha_i}(g)$, where $1\leq i \leq t_1$ is such that $\deg_{\alpha_i}(g) \neq 0$. The last case can be dealt analogously to the above discussion, namely, it is at most as difficult as computing $f$ to check if $g^{\deg(h)/\deg(g)}=h$.

Thus we are left with only one final case, which is $g, h \in \calF'.$ 

As in the case of the word problem in $G_f$ (see Corollary \ref{cor-WP-in-G_f}), we can regroup the factors of the form $([F, F^{s^{\beta}}]^{s^{\gamma}})^{l}$, $\beta>0$, in canonical decompositions of $g$ and $h$, according to the values of $\gamma$. Then, for each $\gamma$, the corresponding product of elements of the form $([F, F^{s^{\beta}}]^{s^{\gamma}})^{l}$ on the left hand side, raised to its $x$-th power, must be equal to the corresponding product of element on the right hand side.  The crucial thing to note is 
that, for a fixed $\gamma>0$, 

\[
[F, F^{s^{\beta}}]^{s^{\gamma}}\leftrightarrow  \left\{
                      \begin{array}{ll}
                       a_{\bar \beta}  & \mbox{if $\beta$ is odd }\\
                       b_{\bar \beta} & \mbox{if $\beta$ is even }
                     \end{array}
                    \right.
\]
induces an isomorphism between the group $\langle [F, F^{s^{\beta}}]^{s^{\gamma}}, \beta \in \nn $ and $A_f$, which directly follows from the equations \eqref{eq-evidence} and \eqref{eq-21}, where 
\begin{align*}
  \bar \beta =  \left\{
                      \begin{array}{ll}
                       (\beta-1)/2 & \mbox{if $\beta$ is odd,}\\
                       \beta/2 & \mbox{if $\beta$ is even.}
                     \end{array}
                    \right.
\end{align*}
    Therefore, in this case, the discrete log problem (in linear time) reduces to the discrete log problem in $A_f$, which, with respect to the length function $\ell$, is linear time equivalent to the computation of $f: \nn \rightarrow \nn$ according to the Lemma \ref{lem-complexity-disc-log-in-Af}.

\begin{conclusion}
\label{conclusion-1}
    The discrete log problem in $G_f$ is linear time equivalent to the computation of $f$.
\end{conclusion}

\begin{cor}[Theorem \ref{thm-main-on-NP-DLP}]
\label{cor-np-complete-discrete-log}
There exist two-generated groups with polynomial time word problem and NP-hard discrete log problem.
\end{cor}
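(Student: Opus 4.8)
The plan is to derive this from Theorem~\ref{thm-dlp-from-f} by exhibiting a single, well-chosen $f$. By Conclusion~\ref{conclusion-1} the discrete logarithm problem in $G_f$ is linear-time equivalent to the evaluation of $f$, and by Corollary~\ref{cor-complexity-WP-in-Af} the word problem in $G_f$ is, up to a polynomial factor, as hard as the verification problem $T_{f^{-1}}$, i.e.\ as deciding the graph $\{(n,m)\colon f(n)=m\}$. So it suffices to produce a computable $f\colon\nn\to\nn$ such that: (a) the relation ``$f(n)=m$'' is decidable in polynomial time; (b) $f(n)\le 2^{P(\log n)}$ for a fixed polynomial $P$; and (c) the evaluation of $f$ is NP-hard. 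Indeed, (a) makes the word problem in $G_f$ polynomial, (a) together with (b) put the discrete logarithm problem in $G_f$ in NP, and (c) then makes it NP-complete.

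For membership in NP I would invoke the explicit solution formulas from the proofs of Lemma~\ref{lem-complexity-disc-log-in-Af} and Conclusion~\ref{conclusion-1}: whenever $g^{x}=h$ has a solution in $G_f$, the exponent $x$ is either forced by the $\pi_s$- or $\deg_{\alpha_i}$-data of $g$ and $h$ (and then has bit length $\calO(\ell(g)+\ell(h))$), or else $g,h\in\calF'$ and $x$ is the single rational $x=\dfrac{f(n_1)l_1'+k_1'}{f(n_1)l_1+k_1}$ extracted from one coordinate of $A_f$, whose bit length is polynomial in $\ell(g)+\ell(h)$ by~(b). A guessed exponent $x$ is then checked by fast exponentiation followed by one word-problem query in $G_f$, all of which runs in polynomial time by~(a); hence the decision version of the discrete logarithm problem in $G_f$ lies in NP.

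For NP-hardness I would use, as in Conclusion~\ref{conclusion-1}, that the instance $a_n^{x}=b_n$ has the \emph{unique} solution $x=f(n)$, so that it is enough to make the evaluation of $f$ NP-hard; for this I would take $f$ to be a witness-encoding function of a fixed NP-complete problem $L$. Fixing a polynomial-time verifier $V$ and a polynomial $q$ with $x\in L\iff\exists w\in\{0,1\}^{q(|x|)}\ V(x,w)=1$, I would let the argument of $f$ encode a pair (instance $x$, bit position $j$) and let $f$ return the $j$-th bit of a designated witness of $x$, with a reserved value when $x\notin L$; running over $j=1,\dots,q(|x|)$ reconstructs a witness, so $L$ reduces in polynomial time to the evaluation of $f$, hence to the discrete logarithm problem in $G_f$. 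Such an $f$ is computable and plainly satisfies~(b).

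The real obstacle is reconciling~(a) with~(c): ``$f(n)=m$'' must stay polynomial-time checkable while the evaluation of $f$ remains NP-hard, and the naive choice ``designated witness $=$ lexicographically least witness'' destroys~(a), since certifying minimality is a co-NP task. I would get around this by arranging that the encoded instances carry a \emph{unique} witness — e.g.\ by composing the encoding with a Valiant--Vazirani-style isolation step, or by folding the canonicity requirement directly into the instance coding — so that ``$m$ is the designated-witness value of $n$'' becomes a polynomial-time predicate while $L$ still reduces to the evaluation of the resulting $f$. Checking that such an $f$ genuinely exists, is computable, and realizes both reductions with only polynomial overhead when complexities are measured in the length function $\ell$ is the technical heart of the statement; the rest is Theorem~\ref{thm-dlp-from-f} applied verbatim.
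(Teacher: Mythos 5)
Your route is the same as the paper's: apply Theorem \ref{thm-dlp-from-f} (via Corollary \ref{cor-complexity-WP-in-Af} and Conclusion \ref{conclusion-1}) to a single $f$ satisfying (a) the graph $\{(n,m):f(n)=m\}$ is polynomial-time decidable, (b) $\log f(n)$ is polynomially bounded in $\log n$, and (c) evaluating $f$ is NP-hard; the paper's own proof simply \emph{postulates} such an $f$ (``an NP-complete function such that $f(n)$ serves as its certificate''), and you are right that producing it is the real content, and that the lexicographically-least-witness choice fails. However, your proposed repairs cannot close this gap, because no $f$ with (a)+(b)+(c) exists unless $\mathrm{NP}=\mathrm{coNP}$. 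Since the presentation \eqref{eq-presentation-A-for-DiscreteLog} forces $f$ to be a \emph{total, single-valued} function, consider the language of pairs $(n,p)$ such that $p$ is a prefix of the binary expansion of $f(n)$. By (a) and (b) it lies in $\mathrm{NP}$ (guess $m=f(n)$, verify $(n,m)$ and the prefix), and also in $\mathrm{coNP}$ (guess $m$ with $(n,m)$ in the graph and $p$ not a prefix of $m$; soundness uses single-valuedness, completeness uses totality and (b)). Hence $f$ is computable by polynomial-time prefix search with an oracle in $\mathrm{NP}\cap\mathrm{coNP}$, i.e.\ $f\in \mathrm{FP}^{\mathrm{NP}\cap\mathrm{coNP}}$; if SAT reduces (even Cook-style, as in your witness-bit reconstruction) to evaluating $f$, then $\mathrm{SAT}\in \mathrm{P}^{\mathrm{NP}\cap\mathrm{coNP}}=\mathrm{NP}\cap\mathrm{coNP}$, so $\mathrm{NP}=\mathrm{coNP}$.

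This obstruction also explains why your two suggested fixes fail. A Valiant--Vazirani isolation step is a randomized reduction, so at best it gives hardness under randomized reductions rather than NP-completeness as stated; worse, $f$ must still be defined, with polynomial-time checkable graph, on the instances where isolation fails (zero or several witnesses), and certifying that the ``default'' value is correct there is again not a polynomial-time predicate. ``Folding canonicity into the instance coding'' meets the same wall, since the argument above applies to \emph{any} total single-valued $f$ with P-decidable graph and short outputs, regardless of how instances are encoded. So your proposal (and, for the same reason, the paper's own proof, which you are mirroring) establishes the corollary only under an extra hypothesis such as $\mathrm{NP}=\mathrm{coNP}$, or under a weakened reading of ``NP-complete discrete log problem'' (multivalued search/promise problems, or hardness under randomized reductions); the existence of the required $f$ is the missing step, not a routine technicality.
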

\begin{proof}
    Indeed, let us assume that $f: \nn \rightarrow \nn$ is an NP-hard function and such that $f(n)$ serves as its polynomial-time certificate (i.e. witness), meaning that for $(n, m) \in \nn \times \nn$, one in polynomial time can verify whether or not $f(n)=m$. Then, according to Corollaries \ref{cor-complexity-WP-in-Af}, \ref{cor-wp-in-A_f}, the word problem in $G_f$ will be solvable in polynomial time, while by Conclusion \ref{conclusion-1}, the discrete log problem in $G_f$ is NP-hard.
\end{proof}

\section{A new cryptographic protocol}
\label{section-new-protocol}
General idea:

We start with two functions $f, g: \nn \times \nn \rightarrow \nn$ such that $f$ is a one-way function. We need the following commutative relation to hold between $f$ and $g$: for every $n, p, q \in \nn$,
\begin{align}
    \label{eq-for-new-chemes}
    g(f(n, p), q) = g(f(n, q), p).
\end{align}
Next, we consider three countable abelian groups with enumerated sets of generators:
\[
A = \langle a_1, a_2, \ldots \rangle,
\]
\[
B = \langle b_1, b_2, \ldots \rangle,
\]
\[
C = \langle c_1, c_2, \ldots \rangle.
\]
such that $A \leq B \leq C$.

We define additional relations between $A$, $B$ and $C$ as follows: for $m,n, p, q \in \nn$,
\[
    a_n^p = b_{f(n, p)}
\]
and
\[
    b_m^q = c_{g(m, q)}.
\]
\begin{rem}
    Note that in order to make sure that the above defined relators between the elements of $A$, $B$, and $C$ do not make generating elements trivial in the group $A\times B \times C$, we need to put extra conditions on the maps $f, g: \nn \times \nn \rightarrow \nn$. For example, requiring them to be injective would work. However, it is not critically important if elements of the groups are represented by their symbolic forms as finite words formed by a generating alphabet. Although, it might  become important once $A, B, C$ are replaced by more concrete realizations, say, by numerical matrices.
\end{rem}
\subsection{A cryptographic key sharing scheme}
~\\
\underline{Public data: The functions $f$ and $g$ are  public, as well as $n\in \nn$} \\
\underline{Private key of Alice: $p\in \nn$} \\
\underline{Private key of Bob: $q\in \nn$} \\

The general idea of this cryptographic scheme is that Alice applies her private key $p$ for $a_n \mapsto a_n^p = b_{f(n, p)}$. Then, Bob accesses $b_{f(n, p)}$ and applies his private key $q$ for $b_{f(n, p)} \mapsto b_{f(n, p)}^q = c_{g(m, q)}$. Moreover, Bob, again using his private key $q$, makes public the following $a_n \mapsto a_n^q = b_{f(n, q)}$

\underline{The shared key between Alice and Bob:} $c_{g(m, q)}$ is the shared key between Alice and Bob. The reason why this key can be secretly shared lies in the Equation \eqref{eq-for-new-chemes}, as both $b_{f(n, p)}$ and $b_{f(n, q)}$ are publicly available. Moreover, since $f$ is a one-way function, the shared key will stay private.

\subsection{Adaptation in the setting of our group theoretical construction:} Note that in our construction, we encoded the generating elements of $\calC:=\langle a_1, a_2, \ldots, b_1, b_2, \ldots, c_1, c_2, \ldots \rangle$ via generating elements $F$ and $s$ of the group $G_{\calC}$, where $G_{\calC}$ is defined as $G_f$ from Section \ref{section-the-embedding} but with respect to $\calA$. This allows us to adapt this scheme within the group-theoretical platform provided by $G_{\calC}$. (Note that $\calC$ as a group is the same as $C$, but we emphasize that all elements $a_i, b_i, c_i$ must be accounted for the definition of $G_{\calC}$.)

\subsection{Possible candidates for $f, g: \nn \times \nn \rightarrow \nn$}
~\\
\underline{A general approach via group actions:} In this approach, we will assume that $f$ and $g$ in fact coincide. Let $\Gamma = \{1= g_0, g_1, g_2, \ldots \}$ be some countable group with computably enumerated elements $1=g_0, g_1, \ldots$. Let $X=\{x_1, x_2, \ldots \}$ be a countable set with computably enumerated elements such that $\Gamma$ acts on $X$ via \emph{commutative group action.} Let $\tilde f: X \rightarrow X$ be a one-way function that is a permutation of $X$

In this setting, the value of $f(n, p)$ can be defined as the enumeration of the element $g_p \cdot \tilde f(x_n)$. Note that, in this setting, since $f$ and $g$ coincide, Equation \eqref{eq-for-new-chemes} turns into
\[
f(f(n, p), q) = f(f(n, q), p),
\]
which is equivalent to
\[
(g_q \cdot g_p) \tilde f(x_n)= g_q \cdot (g_p \cdot \tilde f(x_n)) = g_q \cdot (g_p \cdot \tilde f(x_n)) = (g_p \cdot g_q) \tilde f(x_n),
\]
which holds because by our assumption $G \curvearrowright X$ is commutative.

\section{Conclusions}

We have also initiated the study of the cryptographic utility of this type of construction, touching on its post-quantum viability, and giving an example of a cryptopgraphic scheme. Nevertheless, our hope is that this work will spark further work investigating each of these two avenues in greater detail.

\section*{Acknowledgements}
The authors acknowledge the support from
the Institut Henri Poincaré (UAR 839 CNRS-Sorbonne Université) and LabEx CARMIN (ANR-10-LABX-59-01). 
DK and CB conducted this work partially with the support of ONR Grant 62909-24-1-2002. AD and DK thank Institut des Hautes \'Etudes Scientifiques - IHES for providing stimulating environment while this project was partially done. They were both supported by the CARMIN fellowship during the completion of this project. All authors acknowledge the support of IHP during the trimester on Post-quantum Algebraic Cryptography in the Fall 2024.

\mbox{}

 \addtocontents{toc}{\setcounter{tocdepth}{-10}}
 
\bibliographystyle{alpha}

\bibliography{eilos}

\end{document}